\newtheorem{theorem}{Theorem}[section]
\newtheorem{corollary}[theorem]{Corollary}
\newtheorem{lemma}[theorem]{Lemma}
\newtheorem{proposition}[theorem]{Proposition}
\theoremstyle{definition}
\newtheorem{question}[theorem]{Question}
\numberwithin{equation}{subsection}
\newcommand{\Maps}{\operatorname{Map}}
\newcommand{\Aut}{\operatorname{Aut}}
\newcommand{\Autg}{\operatorname{\textbf{Aut}}}
\newcommand{\Ext}{\operatorname{Ext}}
\newcommand{\Bilin}{\operatorname{Bilin}}
\newcommand{\Ker}{\operatorname{Ker}}
\newcommand{\im}{\operatorname{Im}}
\newcommand\Norm{{\scriptstyle\mathrm{N}}}
\newcommand{\Inn}{\operatorname{Inn}}
\newcommand{\Ho}{\operatorname{H}}
\newcommand{\Hog}{\operatorname{\textbf{H}}}
\newcommand{\B}{\operatorname{B}}
\newcommand{\Z}{\operatorname{Z}}
\newcommand{\Zg}{\operatorname{\textbf{Z}}}
\newcommand{\Bg}{\operatorname{\textbf{B}}}
\newcommand{\id}{\mathrm{id}}
\begin{document}

\title[Wells type exact sequence for solutions of the Yang--Baxter equation]{A Wells type exact sequence for non-degenerate unitary solutions of the Yang--Baxter equation}

\author{Valeriy Bardakov}
\address{Sobolev Institute of Mathematics and Novosibirsk State University, Novosibirsk 630090, Russia.\linebreak
Novosibirsk State Agrarian University, Dobrolyubova street, 160, Novosibirsk, 630039, Russia.\linebreak
Regional Scientific and Educational Mathematical Center, Tomsk State University, pr. Lenina, 36, Tomsk, 634050, Russia}
\email{bardakov@math.nsc.ru}
\author{Mahender Singh}
\address{Department of Mathematical Sciences, Indian Institute of Science Education and Research (IISER) Mohali, Sector 81,  S. A. S. Nagar, P. O. Manauli, Punjab 140306, India.}
\email{mahender@iisermohali.ac.in}

\subjclass[2010]{Primary 16T25, 20N02; Secondary 55N35, 57M27, 20J05}
\keywords{Brace, cycle set cohomology, linear cycle set, extension, group cohomology, Yang--Baxter equation}

\begin{abstract}
Cycle sets are known to give non-degenerate unitary solutions of the Yang--Baxter equation and linear cycle sets are enriched versions of these algebraic systems. The paper explores the recently developed cohomology and extension theory for linear cycle sets. We derive a four term exact sequence relating 1-cocycles, second cohomology and certain groups of automorphisms arising from central extensions of linear cycle sets. This is an analogue of a similar exact sequence for group extensions known due to Wells. We also relate the exact sequence for linear cycle sets with that for their underlying abelian groups via the forgetful functor and also discuss generalities on dynamical 2-cocycles.
\end{abstract}
\maketitle

\section{Introduction}\label{introduction}
The quantum Yang--Baxter equation is a fundamental equation arising in theoretical physics and has deep connections with mathematics specially braid groups and knot theory. A solution of the quantum Yang--Baxter equation is a linear map $R:V \otimes V \to V\otimes V$ satisfying $$R_{12}R_{13}R_{23} = R_{23}R_{13}R_{12},$$ where 
$V$ is a vector space over a field and $R_{ij}:V \otimes V \otimes V  \to  V \otimes V \otimes V$ acts as $R$ on the $(i,j)$ tensor factor and as the identity on the remaining factor. If $F: V \otimes V \to V \otimes V$ is the flip operator $F(v \otimes w) = w \otimes v$, then  $R:V \otimes V \to V\otimes V$ is a solution of the quantum Yang--Baxter equation if and only if $\overline{R} = F \circ R$ satisfies the braid relation
$$\overline{R}_{12}\overline{R}_{23}\overline{R}_{12} = \overline{R}_{23}\overline{R}_{12}\overline{R}_{23},$$
in which case one says that $\overline{R}$ is a solution of the  Yang--Baxter equation.  Topologically, the braid relation is simply the third Reidemeister move of planar diagrams of links as shown in Figure \ref{fig1}.
 \begin{figure}[!ht]
 \begin{center}
\includegraphics[height=2cm, width=4cm]{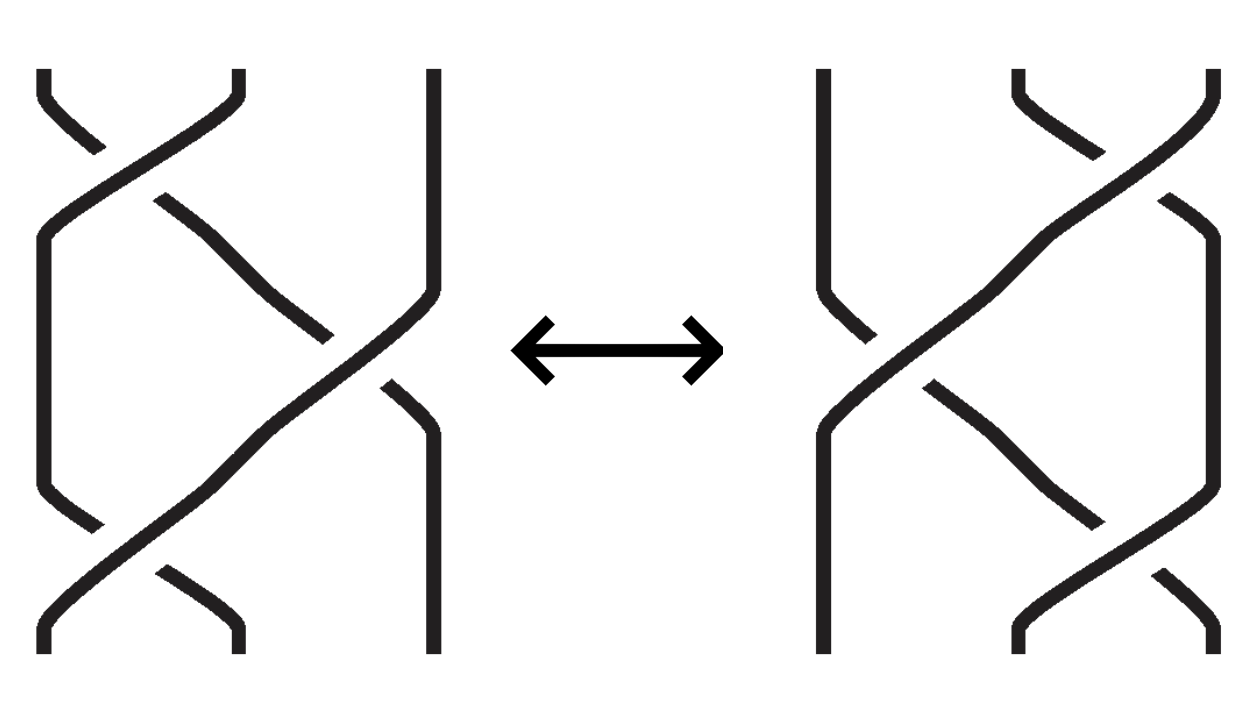}
\end{center}
\caption{The braid relation} \label{fig1}
\end{figure}

 If $X$ is a basis of the vector space $V$, then a map $r: X \times X \to  X \times X$ satisfying
$r_{12}r_{23}r_{12}= r_{23}r_{12}r_{23}$ induces a solution of the Yang--Baxter equation. In this case, we say that $(X,r)$ is a set-theoretic solution of the Yang--Baxter equation.  Writing $r(x, y) = (\sigma_x(y), \tau_y(x))$ for $x, y \in X$, we say that the solution $r$ is non-degenerate if $\sigma_x$ and $\tau_x$ are invertible for all $x \in X$. The problem of finding these set-theoretic solutions was posed by Drinfeld \cite{Drinfeld} and has attracted a lot of attention.
\par

A \emph{(left)  cycle set}, as defined by Rump \cite{Rump2005}, is a non-empty set $X$ with a binary operation $\cdot$ having bijective left translations $X \to X$, $x \mapsto y \cdot x$, 
and satisfying the relation
	\begin{align}\label{E:Cyclic}
	(x\cdot y)\cdot (x\cdot z)=(y\cdot x)\cdot (y\cdot z)
\end{align}
for all $x, y, z \in X$. A cycle set is \emph{non-degenerate} if the squaring map $a \mapsto a \cdot a$ is invertible. It is known that every finite cycle set is non-degenerate \cite[Theorem 2]{Rump2005}. Rump showed that cycle sets are in bijection with non-degenerate unitary set-theoretic solutions of the Yang--Baxter equation. These solutions give a rich class of structures and are connected with semigroups of special type, Bieberbach groups \cite{IvanovaBergh}, biquandles \cite{FJSK, KM2005}, colourings of plane curves \cite{ESS1999}, Hopf algebras \cite{EtingofGelaki1998} and Garside groups \cite{Chouraqui}, to name a few. Special solutions, particularly, the ones possessing self-distributivity are intimately connected to invariants of knots and links in the 3-space and thickened surfaces \cite{FJSK, KM2005, LebedJKTR2018}. Cycle sets have been proved to be very useful in understanding the structure of  solutions of the Yang--Baxter equation and for obtaining general classification results. Cycle sets as braces give only unitary solutions whereas skew braces, racks, bi-quandles etc. give general solutions. The structure of cycle sets is still far from being completely understood, and many important questions on the topic are yet not answered. The reader is referred to \cite{CJO2010, CJO2014, Chouraqui, Dehornoy2015, ESS1999, GI2018, LV2016, LV2017, YanZhu2000, RumpJA2007, Smok2018, Smoktunowicz2018, Soloviev2000} for some recent works. 
\par

A \emph{(left) brace} is an abelian group $(A,+)$ with an additional group operation $\circ$ such that
	\begin{align}\label{E:Brace}
	a\circ (b+c)+a=a\circ b+a\circ c
\end{align}
holds for all $a,b,c \in A$. Braces were introduced by Rump \cite{RumpJA2007} in a slightly different but equivalent form where he showed that these algebraic systems give set-theoretic solutions of the Yang--Baxter equation. The preceding definition is due to Ced{\'o}, Jespers and Okni{\'n}ski \cite{CJO2014}. Each abelian group is trivially a brace with $a+b=a \circ b$. In addition, regular rings give a large supply of braces. Relations between the additive and the multiplicative groups of a brace have been explored in many recent works, for example, \cite{CedoSmokVendramin, GorshkovNasybullov, Nasybullov2019}.
\par

A \emph{(left) linear cycle set} is a cycle set $(X,\cdot)$ with an abelian group operation $+$ satisfying the conditions
\begin{equation}\label{E:LinCyclic}
x\cdot (y+z) =x\cdot y+x\cdot z
\end{equation}
and
\begin{equation}\label{E:LinCyclic2}
(x+y)\cdot z =(x\cdot y)\cdot (x\cdot z)
\end{equation}
for all $x, y, z\in X$. This notion goes back to Rump \cite{RumpJA2007}, who showed it to be equivalent to the brace structure via the relation $$x\cdot y=x^{-1}\circ(x+y),$$ 
where $x^{-1}$ is inverse with respect to $\circ$. An abelian group can be viewed as a linear cycle set by taking $x\cdot y=y$ for all $x, y \in X$, and referred as a {\it trivial} linear cycle set. Rump \cite{RumpJA2007} showed that linear cycle sets are closely related to radical rings.
\par

It was pointed out in \cite{BCJO2017} that an extension theory for cycle sets (equivalently braces) would be crucial for a classification of these objects. This led to development of an extension theory by Lebed and Vendramin \cite{LV2016, LV2017}. A homology and cohomology theory for linear cycle sets (and hence for braces) was developed recently in \cite{LV2016}.  As in case of groups, Lie algebras, quandles or any nice algebraic system, the second cohomology groups were shown to classify central cycle set extensions.  A cohomology theory for general cycle sets was developed in \cite{LV2017}. We will follow the linear cycle set language of \cite{LV2016} since it gives a neat construction of cohomology and extension theory. It is worth noting that the right and the two-sided analogues of braces and cycle sets can be defined analogously and have been considered in the literature.
\par

In this paper, we derive an exact sequence relating 1-cocycles, certain group of automorphisms and second cohomology groups of linear cycle sets. This can be thought of as a linear cycle set analogue of a fundamental exact sequence for groups due to Wells \cite{Wells1971}.  For notational convenience, sometimes, we will denote the value of a map $\phi$ at a point $x$ by $\phi_x$. We use the notation $\Aut$,  $\Z^1$ and $\Ho^2$ to denote group of automorphisms, group of  1-cocycles and second cohomology group of a linear cycle set, respectively. To distinguish linear cycle sets from groups, we use the bold notation $\Autg$, $\Zg^1$ and $\Hog^2$ to denote group of automorphisms, group of  1-cocycles and second cohomology of a group, respectively.
\par

Section \ref{prelim} contains preliminaries and some basic results.  We prove that there is a natural group homomorphism from the second linear cycle set cohomology to the second symmetric cohomology of the underlying abelian group (Proposition \ref{homo-cycle-group-cohomology}) and also examine this homomorphism for trivial cycle sets (Proposition  \ref{trivial-cycle-group-cohomology}). Section \ref{sec-action-auto-cohomo} prepares the foundation for the main result. Given a linear cycle set $(X, \cdot, +)$ and an abelian group $A$, we define an action of $\Aut(X) \times \Autg(A)$ on $\Ho_\Norm^2(X; A)$. As a consequence, we obtain a lower bound on the size of $\Ho_\Norm^2(X; A)$ (Corollary \ref{cor-order-cohom}). In Section \ref{sec-exact-seq}, we prove our main theorem (Theorem \ref{abelian-main-theorem}) that associates to each central extension of linear cycle sets a four term exact sequence relating group of 1-cocycles, certain group of automorphisms and second cohomology groups. Section \ref{properties of map theta} explores properties of the important connecting map in this exact sequence (Theorem \ref{main-thm-3}). In Section \ref{comparision-sequences}, we relate the exact sequence with the corresponding Wells exact sequence for the underlying extension of abelian groups via the forgetful functor (Theorem \ref{comparison-wells}). Finally, in Section \ref{dynamical-2-cocycle}, we discuss some generalities on bi-groupoids and dynamical extensions of (linear) cycle sets.
\medskip

\section{Preliminaries and some basic results}\label{prelim}
Recall that a bi-groupoid is a non-empty set with two binary algebraic operations. We begin with the following immediate observation.

\begin{lemma}
Let $(X, \cdot, *)$ be a bi-groupoid and   $S : X \times X \to X \times X$ given by $S(x, y) = (x \cdot y, y * x)$ for $x, y \in X$. Then the following hold:
\begin{enumerate}
\item The pair $(X, S)$ is a set theoretic solution of the Yang--Baxter equation if and only if the equalities
\begin{eqnarray*}
(x \cdot (y \cdot z))  &=& (x \cdot y) \cdot ((y*x)\cdot z),\\
((y\cdot z)* x) \cdot (z *y)  &=&  ((y*x) \cdot z) *(x \cdot y)~\textrm{and}\\
(z*y) *((y \cdot z) *x)  &=&  z* (y * x)
\end{eqnarray*}
hold for all $x, y, z \in X$. 
\item If $x \cdot y = y$ for all $x, y \in X$, then the  pair $(X, S)$  is a set theoretic solution of the Yang--Baxter equation if and only if the operation $*$ is left distributive, i.e.
\begin{equation}\label{rack-eqn}
z *(y * x) = (z*y) * (z* x)
\end{equation}
 for all $x, y, z \in X$.
\end{enumerate}
\end{lemma}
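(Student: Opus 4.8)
The plan is to verify the braid relation $S_{12} S_{23} S_{12} = S_{23} S_{12} S_{23}$ directly by applying both sides to an arbitrary triple $(x, y, z) \in X^3$ and comparing the three output coordinates. Recall from the introduction that $(X, S)$ being a set-theoretic solution of the Yang--Baxter equation means precisely that $S$ satisfies this braid relation, and that two maps $X^3 \to X^3$ coincide if and only if they agree on every element; hence the relation is equivalent to the simultaneous equality of the three coordinate functions.

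For part (1), I would first compute the left-hand side. Applying $S_{12}$ to $(x, y, z)$ gives $(x \cdot y,\, y * x,\, z)$; then $S_{23}$ yields $(x \cdot y,\, (y*x)\cdot z,\, z*(y*x))$; and a final $S_{12}$ produces
\[
\bigl((x \cdot y) \cdot ((y*x)\cdot z),\; ((y*x)\cdot z) * (x \cdot y),\; z*(y*x)\bigr).
\]
Symmetrically, for the right-hand side, $S_{23}$ sends $(x,y,z)$ to $(x,\, y\cdot z,\, z*y)$; then $S_{12}$ gives $(x\cdot(y\cdot z),\, (y\cdot z)*x,\, z*y)$; and a final $S_{23}$ produces
\[
\bigl(x\cdot(y\cdot z),\; ((y\cdot z)*x)\cdot(z*y),\; (z*y)*((y\cdot z)*x)\bigr).
\]
Equating the first, second, and third coordinates of these two triples returns exactly the three displayed identities, so $(X, S)$ is a solution if and only if all three hold.

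For part (2), I would substitute the trivial operation $x \cdot y = y$ into the three identities from part (1) and simplify. Under this substitution any expression $u \cdot v$ collapses to $v$, so the first identity becomes $z = z$ and the second becomes $z * y = z * y$, both trivially true; the only surviving condition is the third, which simplifies to $z*(y*x) = (z*y)*(z*x)$, that is, left distributivity \eqref{rack-eqn}. This yields the claimed equivalence.

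The computation is entirely elementary, so the only real obstacle is bookkeeping: through three successive applications of $S_{12}$ and $S_{23}$ the arguments become nested (for instance $(y*x)\cdot z$ reappears inside both a further $\cdot$ and a further $*$), and one must track the precise order of the arguments in each slot, noting that in $S(a,b)=(a\cdot b,\, b*a)$ the two operations take their arguments in opposite orders. Keeping this ordering straight at every step is where care is needed, but no conceptual difficulty arises.
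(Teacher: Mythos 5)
Your proof is correct and is exactly the direct verification the paper intends: the lemma is stated there as an ``immediate observation'' with no written proof, and your coordinate-by-coordinate computation of $S_{12}S_{23}S_{12}$ and $S_{23}S_{12}S_{23}$, followed by the substitution $x\cdot y=y$ for part (2), is the standard argument behind it. Both your intermediate triples and the three resulting identities match the statement precisely, including the reversed argument order in $S(a,b)=(a\cdot b,\,b*a)$.
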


Recall that $(X, *)$ is a {\it (left) rack} if the maps $x \mapsto y*x$ are bijections and \eqref{rack-eqn} holds for all $x,y,z \in X$. Thus, assertion (2) of the preceding lemma gives a non-degenerate solution of the Yang--Baxter equation if and only if $(X, *)$ is a rack. Racks are useful in defining invariants of framed links in the 3-space. We look for conditions under which a rack is a cycle set. Following \cite{LebedMortier}, we say that a rack $X$ is {\it abelian} if 
$$
x * (y * z) = y * (x * z)
$$
for all $x, y, z \in X$. Note that this condition is equivalent to the group $$\Inn(X)= \langle S_x, ~x \in X~|~ S_x(y):= x*y,~ x, y \in X\rangle $$  of inner automorphisms of $X$ being abelian.

\begin{proposition}
If $(X, *)$ is a rack such that $\Inn(X)$ is abelian, then it is a cycle set.
\end{proposition}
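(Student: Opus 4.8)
The plan is to verify directly that the rack operation $*$ satisfies the two defining axioms of a (left) cycle set. The bijectivity of the left translations $x \mapsto y * x$ requires no work, as it is already built into the definition of a rack; consequently the entire content of the statement lies in checking the cyclic identity \eqref{E:Cyclic} for the operation $\cdot = *$.

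The key step is to rewrite both sides of \eqref{E:Cyclic} using left distributivity \eqref{rack-eqn}, read in the direction $(a * b) * (a * c) = a * (b * c)$. Applied with $a = x$ this gives $(x * y) * (x * z) = x * (y * z)$, and applied with $a = y$ it gives $(y * x) * (y * z) = y * (x * z)$. Hence the desired identity $(x * y) * (x * z) = (y * x) * (y * z)$ is equivalent to
\[
x * (y * z) = y * (x * z)
\]
for all $x, y, z \in X$, which is exactly the assumption that $\Inn(X)$ is abelian. This completes the verification.

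Since everything reduces to a single application of left distributivity on each side, there is no genuine obstacle; the only point demanding a little care is to match the bracketing in \eqref{rack-eqn} correctly against the two sides of \eqref{E:Cyclic}. I would also record that the reduction is reversible: for a rack, the cyclic identity \eqref{E:Cyclic} and the abelian condition $x * (y * z) = y * (x * z)$ are equivalent, so a rack is a cycle set precisely when $\Inn(X)$ is abelian, which yields the converse of the proposition as well.
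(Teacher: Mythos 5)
Your proof is correct and follows essentially the same route as the paper's: both sides of the cyclic identity \eqref{E:Cyclic} are rewritten via left distributivity \eqref{rack-eqn} and then compared using $x*(y*z)=y*(x*z)$, which is exactly commutativity of $\Inn(X)$ on its generators $S_x$. Your closing observation that the reduction is reversible---so a rack is a cycle set precisely when $\Inn(X)$ is abelian---is a valid strengthening not stated in the paper, obtained at no extra cost.
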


\begin{proof}
It follows from the rack axiom that
$$
x * (y * z) = (x * y) * (x  * z)$$
and
$$y * (x * z) = (y * x) * (y  * z)
$$
for all $x, y, z \in X$. Since $\Inn(X)$ is abelian, we get $$(x * y) * (x  * z)=  (y * x) * (y  * z),$$ which is desired.
\end{proof}
\bigskip

\subsection{Cohomology and extensions of linear cycle sets}
 A \emph{morphism} between linear cycle sets $X$ and $Y$ is a map $\varphi \colon X\to Y$ satisfying  $\varphi(x+x')=\varphi(x)+\varphi(x')$ and $\varphi(x \cdot x')=\varphi(x) \cdot \varphi(x')$ for all $x,x'\in X$. The \emph{kernel} of $\varphi$ is defined by $\Ker (\varphi) = \varphi^{-1}(0)$. The notion of \emph{image}, of a \emph{short exact sequence} of linear cycle sets, and of \emph{linear cycle subsets} are defined in the usual manner. 
\par

Two  linear cycle set extensions $A \overset{i}{\rightarrowtail} E \overset{\pi}{\twoheadrightarrow} X$ and $A \overset{i'}{\rightarrowtail} E' \overset{\pi'}{\twoheadrightarrow} X$ are called \emph{equivalent} if there exists a linear cycle set isomorphism $\varphi \colon E \to E'$ such that the diagram 

$$
\xymatrix{
A  \ar[r]^{i}  \ar[d]^{\id} & E \ar[d]^{\varphi} \ar[r]^{\pi} & X  \ar[d]^{\id} \\
A  \ar[r]^{i'} & E' \ar[r]^{\pi'} & X}
$$
commutes. A cohomology theory for linear cycle sets is developed in the recent works of Lebed and Vendramin  \cite{LV2016, LV2017}. Following \cite{LV2016}, a {\it 2-cocycle} for a linear cycle set  $(X, \cdot, +)$ with coefficients in the (additively written) abelian group $A$ consists of two maps $f,g \colon X \times X \to A$ satisfying the  conditions
\begin{eqnarray}
g(x,y) &=& g(y,x),\label{g-symmetric}\\
g(x,y) + g(x+y,z) &=&  g(y,z) + g(x,y+z),\label{CocycleFull3}\\
f(x+y,z) &=&  f(x \cdot y,x \cdot z) + f(x,z)~\textrm{and}\label{CocycleFull}\\
f(x,y+z) - f(x,y) - f(x,z) &=&  g(x \cdot y, x \cdot z) - g(y,z)\label{CocycleFull2}
\end{eqnarray}
for all $x, y, z \in X$. Note that, if $(f,g)$ is a $2$-cocycle of a linear cycle set $(X, \cdot, +)$ with coefficients in an abelian group $A$, then conditions \eqref{g-symmetric}-\eqref{CocycleFull3}-\eqref{CocycleFull}\eqref{CocycleFull2} imply that
\begin{align*}
&f(0,x) = f(x,0) = 0~\textrm{and}\\
&g(0,x) = g(x,0) = g(0,0)
\end{align*}
for all $x \in X$. A pair of maps $f,g \colon X \times X \to A$ is called a {\it 2-coboundary}  if there exists a map $\lambda \colon X \to A$ such that
\begin{eqnarray}
	f(x,y) &=& \lambda (x \cdot y) - \lambda(y)~\textrm{and}\label{Coboundary1}\\
	g(x,y) &=& \lambda (x + y) - \lambda(x) - \lambda(y)\label{Coboundary2}
\end{eqnarray}
for all $x, y \in X$. 
\par
A $2$-cocycle $(f,g)$ is called normalised if $g(0,0) = 0$, whereas a $2$-coboundary $(f,g)$ is called normalised if the map
$\lambda: X \to A$ satisfy $\lambda(0) = 0$. We denote the group of normalised 2-cocycles by $\Z_\Norm^2(X; A)$, and the group of normalised 2-coboundaries by $\B_\Norm^2(X; A)$. The quotient $\Z_\Norm^2(X; A)/\B_\Norm^2(X; A)$ is the {\it normalised cohomology} group  $\Ho_\Norm^2(X; A)$ of $X$ with coefficients in $A$. We shall also need the group of normalised 1-cocycles defined as
\begin{equation}\label{1-cocycles}
\Z_\Norm^1(X; A)= \big\{\lambda: X \to A~|~ \lambda (x + y) = \lambda(x) + \lambda(y)~\textrm{and}~\lambda (x \cdot y) = \lambda(y)~\textrm{for all}~x, y \in X \big\}.
\end{equation}

The following is an analogue of a similar classical result for groups \cite[Lemma 5.2]{LV2016}.

\begin{lemma}\label{linear-cycle-structure}
Let $(X, \cdot, +)$ be a linear cycle set, $A$ an abelian group and $f,g \colon X\times X\to A$ two maps. Then the set $X \times A$ with the operations
\begin{align*}
(x, a) + (y, b) &= \big( x + y, a + b + g(x,y)\big) ~\textrm{and}\\
(x, a) \cdot (y, b) &= \big(x \cdot y, b + f(x,y)\big)
\end{align*}
for $a,b \in A$,\, $x,y \in X$, is a linear cycle set if and only if $(f,g)$ is a $2$-cocycle.
\end{lemma}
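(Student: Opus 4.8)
The plan is to verify the five defining axioms of a linear cycle set for $X \times A$ one at a time, exploiting the fact that each axiom splits cleanly into a condition on the first coordinate and a condition on the second coordinate. The organizing observation is that in every case the first-coordinate condition reduces to the corresponding axiom for $X$ itself, which holds by hypothesis since $(X,\cdot,+)$ is already a linear cycle set; hence the genuine content of each axiom is carried by the second coordinate, where it matches precisely one of the cocycle conditions \eqref{g-symmetric}--\eqref{CocycleFull2}. This axiom-by-axiom dictionary is exactly what produces the biconditional, with no surplus conditions on either side.

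First I would treat the additive structure $\big((x,a)+(y,b)=(x+y,\,a+b+g(x,y))\big)$. Expanding associativity and commutativity of $+$ on $X \times A$, the first coordinates reproduce the group laws of $(X,+)$, while equating the second coordinates yields respectively the identities \eqref{CocycleFull3} and \eqref{g-symmetric}; conversely these two cocycle identities force $+$ on $X\times A$ to be associative and commutative. The existence of a two-sided identity and of inverses is the one place needing a small extra argument: from \eqref{CocycleFull3} with $x=y=0$ one extracts $g(0,z)=g(0,0)$ for all $z$ (and $g(z,0)=g(0,0)$ via \eqref{g-symmetric}), which shows $(0,-g(0,0))$ is a two-sided identity, after which the inverse of $(x,a)$ is given by $\big(-x,\,-a-g(x,-x)-g(0,0)\big)$. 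Thus $(X\times A,+)$ is an abelian group precisely when $g$ satisfies \eqref{g-symmetric} and \eqref{CocycleFull3}.

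Next I would handle the operation $\cdot$. Since $y\mapsto x\cdot y$ is bijective on $X$ and $b \mapsto b+f(x,y)$ is a bijection of $A$, each left translation of $\cdot$ on $X\times A$ is a composite of bijections, so bijectivity of left translations holds unconditionally and contributes no constraint on $f$. Expanding \eqref{E:LinCyclic} for $X\times A$, the first coordinate gives $x\cdot(y+z)=x\cdot y+x\cdot z$ (true in $X$) and the second coordinate gives \eqref{CocycleFull2}; expanding \eqref{E:LinCyclic2}, the first coordinate gives $(x+y)\cdot z=(x\cdot y)\cdot(x\cdot z)$ (true in $X$) and the second coordinate gives \eqref{CocycleFull}. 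Finally, the cycle set identity \eqref{E:Cyclic} needs no separate hypothesis: once $+$ is commutative (i.e. \eqref{g-symmetric} holds) and \eqref{E:LinCyclic2} holds on $X\times A$, the equality $(x,a)+(y,b)=(y,b)+(x,a)$ combined with \eqref{E:LinCyclic2} forces $((x,a)\cdot(y,b))\cdot((x,a)\cdot(z,c))=((y,b)\cdot(x,a))\cdot((y,b)\cdot(z,c))$, which is \eqref{E:Cyclic}. Assembling these equivalences proves both directions at once. The main point to watch—rather than a true obstacle—is precisely this bookkeeping: identifying that the group identity and inverses follow from the derived relation $g(0,z)=g(0,0)$, and recognizing that \eqref{E:Cyclic} is redundant in the linear setting, so that the four listed cocycle conditions are exactly necessary and sufficient.
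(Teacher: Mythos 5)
Your proof is correct: the axiom-by-axiom verification, with the second coordinates matching the four cocycle conditions \eqref{g-symmetric}--\eqref{CocycleFull2}, the identity $(0,-g(0,0))$ and inverses extracted from the derived relation $g(0,z)=g(0,0)$, and the observation that \eqref{E:Cyclic} follows from commutativity of $+$ together with \eqref{E:LinCyclic2} (a redundancy the paper itself records in Section \ref{dynamical-2-cocycle}, citing Rump), all check out. The paper states this lemma without proof, citing Lebed--Vendramin \cite[Lemma 5.2]{LV2016}, and your direct verification is essentially the same argument as in that source.
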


The linear cycle set of Lemma \ref{linear-cycle-structure} is denoted by $X \oplus_{f,g} A$. A reformulation of Lemma \ref{linear-cycle-structure} for braces is as follows \cite[Lemma 5.4]{LV2016}.

\begin{lemma}\label{brace-structure-cocycle}
	Let $(X, \circ, +)$ be a brace, $A$ be an abelian group, and $f,g\colon X\times X\to A$ be two maps. Then the set $X \times A$ with the operations
\begin{align*}
	(x, a)+(y, b)&=\big(x+y, a+b+g(x,y)\big)~\textrm{and}\\
	(x, a) \circ (y, b)&=\big(x \circ y, a+b+f(x,y)\big)
	\end{align*}
for $a,b\in A$,\, $x, y\in X$, is a brace if and only if for the corresponding linear cycle set $(X, \cdot, +)$, the pair $(\overline{f}, g)$
is a $2$-cocycle, where
\begin{align}
\overline{f}(x,y) = -f(x, x \cdot y) + g(x,y)
\end{align}
for all $x, y \in X$.
\end{lemma}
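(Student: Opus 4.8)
The plan is to avoid verifying the brace axiom on $X \times A$ directly and instead to deduce the statement from Lemma \ref{linear-cycle-structure} together with Rump's equivalence between braces and linear cycle sets \cite{RumpJA2007}. Write $E = X \times A$, let $\circ$ and $+$ be the operations defined through $f$ and $g$, and let $\cdot$ be the operation $(x,a)\cdot(y,b) = \big(x \cdot y,\, b + \overline{f}(x,y)\big)$ built from $\overline{f}$ and $g$ as in Lemma \ref{linear-cycle-structure}. Recall that under Rump's correspondence a brace and its associated linear cycle set share the same additive group and are linked by $u \cdot v = u^{-1} \circ (u+v)$, equivalently by the identity $u \circ (u \cdot v) = u + v$; on the base $X$ this last identity holds by the very definition of the cycle set operation.

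First I would establish the single formal computation on which everything rests, namely the identity
\begin{equation*}
(x,a) \circ \big((x,a)\cdot(y,b)\big) = (x,a) + (y,b),
\end{equation*}
valid for arbitrary maps $f$ and $g$ and requiring no cocycle hypothesis. Expanding the left-hand side gives $\big(x \circ (x \cdot y),\, a + b + \overline{f}(x,y) + f(x, x\cdot y)\big)$; the first coordinate equals $x + y$ by Rump's identity in $X$, while the second collapses to $a + b + g(x,y)$ precisely because $\overline{f}(x,y) = -f(x, x \cdot y) + g(x,y)$, so the right-hand side $(x,a)+(y,b)$ is recovered. This is the step I expect to carry the proof: it is exactly the relation coupling $\circ$ and $\cdot$ in Rump's correspondence, and it never appeals to $\circ$ being a group operation or to $(\overline{f},g)$ being a cocycle.

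For the forward implication, suppose $E$ is a brace. Then $(E,\circ)$ is a group, so left translation by $(x,a)$ under $\circ$ is injective, and the linear cycle set operation $u^{-1}\circ(u+v)$ associated to $E$ is the unique operation satisfying $u \circ (u \cdot v) = u+v$. The displayed identity therefore forces $\cdot$ to coincide with it, so $\big(E, +, \cdot\big) = X \oplus_{\overline{f},g} A$ is the linear cycle set associated to the brace $E$; Lemma \ref{linear-cycle-structure} then yields that $(\overline{f},g)$ is a $2$-cocycle. For the converse, suppose $(\overline{f},g)$ is a $2$-cocycle. By Lemma \ref{linear-cycle-structure}, $E = X \oplus_{\overline{f},g} A$ is a linear cycle set, so the map $v \mapsto (x,a)\cdot v$ is a bijection. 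Rump's construction endows $E$ with a brace operation $\circ'$ characterised by $u \circ' (u \cdot v) = u + v$; since $\circ$ satisfies the same relation and left $\cdot$-translation is surjective, we conclude $\circ = \circ'$, whence $\big(E, +, \circ\big)$ is a brace.

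The main obstacle, and really the only genuine subtlety, is to resist computing inverses $(x,a)^{-1}$ or checking the brace axiom coordinatewise: in the converse direction such an approach is circular, since one does not yet know that $\circ$ is a group operation. Routing both implications through the bijectivity of the relevant left translations, so that the coupling identity alone pins down the partner operation, removes this difficulty. The remaining bookkeeping—normalisation and the coincidence of the additive and multiplicative identities on $E$—is absorbed into Rump's correspondence and is routine.
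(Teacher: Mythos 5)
Your proposal is correct, but note that the paper itself contains no proof of this lemma: it is imported verbatim, with citation, from \cite[Lemma 5.4]{LV2016}, so the comparison is with the computational verification one would naturally write following that source. Your key observation checks out: the coupling identity $(x,a)\circ\big((x,a)\cdot(y,b)\big)=(x,a)+(y,b)$ holds for arbitrary $f,g$, since the first coordinate is $x\circ(x\cdot y)=x+y$ (valid because $(X,\cdot,+)$ is by hypothesis the cycle set of the brace $(X,\circ,+)$) and the second collapses to $a+b+g(x,y)$ exactly by the definition $\overline{f}(x,y)=-f(x,x\cdot y)+g(x,y)$. Both cancellation arguments are then sound: when $E$ is a brace, injectivity of left $\circ$-translations identifies $\cdot$ with the cycle set operation of $E$, so Lemma \ref{linear-cycle-structure} applied to $X\oplus_{\overline{f},g}A$ gives the cocycle condition; when $(\overline{f},g)$ is a cocycle, Lemma \ref{linear-cycle-structure} makes $E$ a linear cycle set with bijective left $\cdot$-translations, and surjectivity forces $\circ$ to coincide with the brace operation $u\circ' w=u+L_u^{-1}(w)$ furnished by Rump's correspondence \cite{RumpJA2007}. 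The direct route would instead compute $(x,a)^{-1}\circ\big((x,a)+(y,b)\big)$ coordinatewise, which requires explicit inverses and hence treats the two implications asymmetrically; your version trades that computation for bijectivity of translations, which, as you rightly flag, is precisely what removes the circularity in the converse direction. What your approach buys is a cleaner, symmetric proof; what it costs is reliance on both directions of Rump's equivalence (with the same $+$, and the two constructions mutually inverse) as a black box — a fair one here, since the paper invokes Rump's theorem in exactly this form.
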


A linear cycle subset $X'$ of a linear cycle set $X$ is called \emph{central} if $x \cdot x' = x'$ and $x' \cdot x = x$ for all $x \in X$ and $x' \in X'$. A \emph{central extension} of a linear cycle set $(X, \cdot, +)$ by an abelian group $A$ is the datum of a short exact sequence of linear cycle sets
\begin{align}\label{ShortExactSeq}
&0 \to A \overset{i}{\to} E \overset{\pi}{\to} X \to 0,
\end{align}
where $A$ is endowed with the trivial cycle set structure, and its image $i (A)$ is central in $E$.  Notice that an extension 
\begin{equation}\label{group-ext}
0 \to A \to E \to X \to 0
\end{equation}
of abelian groups can be thought of as a central extension of linear cycle sets by viewing each group as a trivial linear cycle set.  Let  $\Ext(X,A)$ denote the set of equivalence classes of central extensions of $X$ by $A$. The linear cycle set  $X \oplus_{f,g} A$ from Lemma~\ref{linear-cycle-structure} is a central extension of $X$ by $A$ in the obvious way. More precisely, we have a central extension of linear cycle sets
$$0 \to A \overset{i}{\to} X \oplus_{f,g} A \overset{\pi}{\to} X \to 0,$$
where $i(a)=(0, a)$ and $\pi(x, a)=x$ for all $a \in A$ and $x \in X$. Trivially, the underlying extension 
$$0 \to A \overset{i}{\to} (X \oplus_{f,g} A, +) \overset{\pi}{\to} (X, +) \to 0$$
of abelian groups is also central. As in case of groups, all central linear cycle set extensions of $X$ by $A$ arise in this manner \cite[Lemma 5.6]{LV2016}.

\begin{lemma}\label{extension-to-cocycle}
Let $0 \to A \overset{i}{\to} E \overset{\pi}{\to} X \to 0$ be a central linear cycle set  extension and $s \colon X \to E$ be a set-theoretic section of~$\pi$. 
\begin{enumerate}
\item The maps $f, g\colon X \times X \to E$ defined by
	\begin{eqnarray*}
f (x,y) &=& s(x) \cdot s(y) - s(x \cdot y) ~\textrm{and}\\
g  (x,y) &=& s(x) + s(y) - s(x + y)
\end{eqnarray*}
take values in $i(A)$ and $(f,g)$ is a 2-cocycle.
\item The cocycle above is normalised if and only if  $s(0)=0$.
\item Extensions $E$ and $X \oplus_{f,g} A$ are equivalent. 
\item A cocycle $(f',g')$ obtained from another section~$s'$ of~$\pi$ is cohomologous to $(f,g)$. If both cocycles are normalised, then they are cohomologous in the normalised sense.
\end{enumerate}  
\end{lemma}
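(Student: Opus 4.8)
The plan is to push all four statements through a single device: write elements of $E$ in the form $s(x) + i(a)$ and exploit centrality of $i(A)$, namely $e \cdot i(a) = i(a)$ and $i(a) \cdot e = e$ for all $e \in E$ and $a \in A$, to absorb or cancel the correction terms. First I would establish that $f$ and $g$ land in $i(A) = \Ker(\pi)$: applying the linear cycle set morphism $\pi$ and using $\pi s = \id_X$ gives $\pi\big(f(x,y)\big) = (x\cdot y) - (x\cdot y) = 0$ and likewise $\pi\big(g(x,y)\big) = 0$, and since $i$ is injective I may regard $f,g$ as maps $X \times X \to A$. The symmetry \eqref{g-symmetric} and the additive relation \eqref{CocycleFull3} are the classical abelian-extension identities, obtained by commuting and reassociating the expansions of $(s(x)+s(y))+s(z)$ and $s(x)+(s(y)+s(z))$. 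The two genuinely cycle-set conditions are the crux. For \eqref{CocycleFull} I would evaluate $(s(x)+s(y)) \cdot s(z)$ in two ways: on one hand \eqref{E:LinCyclic2} applied in $E$, together with $s(x)+s(y) = s(x+y)+i(g(x,y))$ and centrality, reduces it to $s(x+y) \cdot s(z) = s((x+y)\cdot z) + i(f(x+y,z))$; on the other hand, expanding $(s(x)\cdot s(y)) \cdot (s(x) \cdot s(z))$ via \eqref{E:LinCyclic2}, \eqref{E:LinCyclic} and centrality yields $s((x+y)\cdot z) + i(f(x\cdot y, x\cdot z)) + i(f(x,z))$, and cancelling the common summand gives \eqref{CocycleFull}. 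Condition \eqref{CocycleFull2} comes out the same way from $s(x)\cdot(s(y)+s(z))$ using \eqref{E:LinCyclic}, with $s(x\cdot y)+s(x\cdot z) = s(x\cdot(y+z)) + i(g(x\cdot y, x\cdot z))$ supplying the right-hand side.

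For assertion (2), a direct computation gives $g(0,0) = s(0)+s(0)-s(0) = s(0)$, so the cocycle is normalised precisely when $s(0)=0$. For (3) I would define $\varphi \colon X \oplus_{f,g} A \to E$ by $\varphi(x,a) = s(x) + i(a)$, with inverse $e \mapsto \big(\pi(e),\, i^{-1}(e - s\pi(e))\big)$. The identity $s(x+y)+i(g(x,y)) = s(x)+s(y)$ shows $\varphi$ respects $+$, while $s(x\cdot y)+i(f(x,y)) = s(x)\cdot s(y)$ together with \eqref{E:LinCyclic}, \eqref{E:LinCyclic2} and centrality shows $\varphi\big((x,a)\cdot(y,b)\big) = \varphi(x,a)\cdot\varphi(y,b)$; the square over $\pi$ commutes because $\pi s = \id_X$. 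The one delicate point is the square over $i$: since $\varphi(0,a) = s(0)+i(a)$ equals $i(a)$ only when $s(0)=0$, I would first normalise the section (replace $s$ by the section with $s(0)=0$, which is always available), matching the normalisation that is implicit in the cohomological setting.

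Finally, for (4), given a second section $s'$ I would set $i(\lambda(x)) = s'(x) - s(x) \in i(A)$. The same centrality manipulation used in (1) yields $s'(x)\cdot s'(y) = s(x)\cdot s(y) + i(\lambda(y))$ and $s'(x)+s'(y) = s(x)+s(y)+i(\lambda(x)+\lambda(y))$, whence $f'(x,y)-f(x,y) = \lambda(y)-\lambda(x\cdot y)$ and $g'(x,y)-g(x,y) = \lambda(x)+\lambda(y)-\lambda(x+y)$; comparing with \eqref{Coboundary1} and \eqref{Coboundary2} shows that $(f'-f,\,g'-g)$ is the coboundary of $-\lambda$, so the cocycles are cohomologous. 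If both sections are normalised, then $\lambda(0)=0$ by (2), so the coboundary is normalised and the cocycles are cohomologous in the normalised sense. I expect the main obstacle to be the bookkeeping in part (1): each identity requires applying a cycle-set axiom in $E$ and then repeatedly invoking centrality to strip the $i(A)$-valued correction terms, and it is precisely centrality, rather than the mere group and cycle-set structure of $E$, that makes every such term either absorbable or cancellable.
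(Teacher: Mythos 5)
Your proposal is correct and is essentially the paper's own argument (the lemma is quoted from \cite[Lemma 5.6]{LV2016}, whose proof runs exactly through the two centrality identities $(e+i(a))\cdot e' = e\cdot e'$ and $e\cdot(e'+i(a)) = e\cdot e' + i(a)$, the double expansions of $(s(x)+s(y))\cdot s(z)$ and $s(x)\cdot(s(y)+s(z))$ against \eqref{E:LinCyclic2} and \eqref{E:LinCyclic}, the equivalence $\varphi(x,a)=s(x)+i(a)$, and $\lambda = i^{-1}(s'-s)$ for part (4)). Your flagged subtlety in part (3) is real but harmless: with the paper's inclusion $a\mapsto(0,a)$ the left square indeed forces $s(0)=0$, and either your renormalisation of the section or the corrected inclusion $a\mapsto(0,\,a-g(0,0))$ (with which the same $\varphi$ works for an arbitrary section) resolves it, consistent with the normalised cohomology used throughout the paper.
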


Lemma \ref{extension-to-cocycle} yields a bijective correspondence
\begin{equation}\label{ext-equiv-cohomo}
\Ext(X,A) \longleftrightarrow \Ho_{\Norm}^2(X; A).
\end{equation}
Thus, central extensions of  linear cycle sets (and hence of braces) are completely determined by their second normalised cohomology groups. 

The rest of the paper centres around the extension  $X \oplus_{f,g} A$ of a linear cycle set $(X, \cdot, +)$ by an abelian group $A$ with respect to a given $2$-cocycle  $(f,g)$. Thus, in view of lemmas \ref{linear-cycle-structure} and \ref{extension-to-cocycle}, we would only use the 2-cocycle conditions \eqref{g-symmetric}--\eqref{CocycleFull2} rather than the defining axioms of a linear cycle set.
\medskip

\subsection{Homomorphism from linear cycle set cohomology to group cohomology}

Let $G$ be a group, $A$ a $G$-module and $\Hog^2(G; A)$ the second group cohomology of $G$ with coefficients in $A$. It is well-known that  $\Hog^2(G; A)$ classifies group extensions of $G$ by $A$ inducing the given action of $G$ on $A$ \cite{Brown1981}.  Recall that a group theoretical 2-cocycle $g: G \times G \to A$ satisfying \eqref{g-symmetric} is called a {\it symmetric 2-cocycle}. Let $\Hog^2_{sym}(G; A)$ be the subgroup of $\Hog^2(G; A)$ consisting of cohomology classes of  group theoretical symmetric 2-cocycles (that is, maps satisfying conditions \eqref{g-symmetric} and \eqref{CocycleFull3}). An easy check shows that if both $G$ and $A$ are abelian groups, then  $\Hog^2_{sym}(G; A)$ classifies extensions of the form $$0 \to A \to E \to X \to 0,$$ where $E$ is an abelian group. It follows that $A$ is necessarily a trivial $G$-module in this case. There is a natural group homomorphism from linear cycle set cohomology to symmetric group cohomology.

\begin{proposition}\label{homo-cycle-group-cohomology}
Let $(X, \cdot, +)$ be a linear cycle set and $A$ an abelian group viewed as a trivial $(X, +)$-module. Then there is a group homomorphism
$$\Lambda: \Ho^2_N(X; A) \to \Hog^2_{sym}((X, +); A)$$ given by $\Lambda[(f, g)]= [g]$.
\end{proposition}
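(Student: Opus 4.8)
The plan is to show that the rule $(f,g)\mapsto g$ descends from linear cycle set cocycles to a well-defined additive map on cohomology, and the whole argument hinges on a single structural recognition. First I would observe that the two conditions imposed on the second component $g$ of a linear cycle set $2$-cocycle, namely \eqref{g-symmetric} and \eqref{CocycleFull3}, are \emph{exactly} the defining conditions of a symmetric group $2$-cocycle for the abelian group $(X,+)$ acting trivially on $A$: condition \eqref{g-symmetric} is the symmetry $g(x,y)=g(y,x)$, while \eqref{CocycleFull3}, rewritten as $g(y,z)+g(x,y+z)=g(x+y,z)+g(x,y)$, is precisely the group $2$-cocycle identity with trivial action. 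Hence, for any normalised linear cycle set $2$-cocycle $(f,g)$, the component $g$ lies in $\Zg^2_{sym}((X,+);A)$; the normalisation $g(0,0)=0$, together with the derived relations $g(0,x)=g(x,0)=g(0,0)$ recorded in the excerpt, matches the normalisation convention for group cocycles, so one obtains a class $[g]\in\Hog^2_{sym}((X,+);A)$.

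Next I would verify that the assignment annihilates linear cycle set coboundaries. If $(f,g)$ is a normalised $2$-coboundary witnessed by $\lambda\colon X\to A$ as in \eqref{Coboundary1}--\eqref{Coboundary2}, then $g(x,y)=\lambda(x+y)-\lambda(x)-\lambda(y)$. Setting $\mu=-\lambda$, this becomes $g(x,y)=\mu(x)+\mu(y)-\mu(x+y)$, which is exactly the group $2$-coboundary of $\mu$ for the trivial action of $(X,+)$ on $A$; note this expression is automatically symmetric since $+$ is commutative, so $g\in\Bg^2((X,+);A)$ and $[g]=0$ in $\Hog^2_{sym}$. Combined with the previous paragraph, this shows that $\Lambda[(f,g)]=[g]$ is well defined on cohomology classes.

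Finally, the homomorphism property is immediate once the bookkeeping above is in place: the group structures on both $\Ho^2_N(X;A)$ and $\Hog^2_{sym}((X,+);A)$ are induced by pointwise addition of cocycles, and the addition on linear cycle set cocycles is componentwise, $(f_1,g_1)+(f_2,g_2)=(f_1+f_2,\,g_1+g_2)$. Therefore $\Lambda\big[(f_1,g_1)+(f_2,g_2)\big]=[g_1+g_2]=[g_1]+[g_2]=\Lambda[(f_1,g_1)]+\Lambda[(f_2,g_2)]$.

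I do not anticipate a genuine obstacle in this argument; essentially all of the content is the recognition that conditions \eqref{g-symmetric} and \eqref{CocycleFull3} are the symmetric group cocycle conditions in disguise, and that the first two conditions decouple entirely from the $f$-involving conditions \eqref{CocycleFull}--\eqref{CocycleFull2}. The only points that genuinely require care are the sign convention in the coboundary comparison (the choice $\mu=-\lambda$) and checking that the normalisation conventions on the two sides are compatible; both are routine.
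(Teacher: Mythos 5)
Your proposal is correct and follows essentially the same route as the paper's proof: conditions \eqref{g-symmetric} and \eqref{CocycleFull3} identify $g$ as a symmetric group $2$-cocycle of $(X,+)$ with trivial coefficients, condition \eqref{Coboundary2} sends linear cycle set coboundaries to group coboundaries, and the homomorphism property follows from pointwise addition of cocycles. Your sign adjustment $\mu=-\lambda$ is fine but not even needed, since $\Bg^2((X,+);A)$ is a subgroup and hence closed under negation, which is why the paper passes over this point without comment.
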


\begin{proof}
Given a  normalised $2$-cocycle $(f,g) \in \Z_\Norm^2(X; A)$ for the linear cycle set  $(X, \cdot, +)$ with coefficients in the abelian group $A$, conditions \eqref{g-symmetric} and
\eqref{CocycleFull3} imply that $g$ is a group theoretical symmetric $2$-cocycle of the abelian group $(X,+)$ with coefficients in the abelian group $A$. Further, if $(f,g) \in \B_\Norm^2(X; A)$, then condition \eqref{Coboundary2} imply that $g$ is a group theoretical 2-coboundary. Thus, there is a well-defined map $\Lambda: \Ho^2_N(X; A) \to \Hog^2_{sym}((X, +); A)$ given by $\Lambda[(f, g)]= [g]$. That $\Lambda$ is a group homomorphism follows from the fact that addition of 2-cocycles is point-wise for both linear cycle sets and groups.
\end{proof}

Given two abelian groups $G$ and $A$, let $\Bilin(G \times G, A)$ denote the group of bilinear maps from $G \times G$ to $A$. For trivial cycle sets, the map $\Lambda$ is surjective and its kernel can be determined precisely.

\begin{proposition}\label{trivial-cycle-group-cohomology}
Let $(X, \cdot, +)$ be a trivial linear cycle set and $A$ an abelian group viewed as a trivial $(X, +)$-module. Then
$$\Ho_{\Norm}^2(X; A) \cong \Bilin((X,+) \times (X,+), A) \times \Hog^2_{sym}((X, +); A).$$
\end{proposition}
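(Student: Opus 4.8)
The plan is to let the triviality of the cycle set do all the work: substituting $x \cdot y = y$ into the four cocycle conditions \eqref{g-symmetric}--\eqref{CocycleFull2} decouples them completely, turning a pair $(f,g)$ into an unconstrained pair consisting of a biadditive map $f$ and a symmetric group $2$-cocycle $g$. First I would record the simplifications. Conditions \eqref{CocycleFull} and \eqref{CocycleFull2} become
\begin{align*}
f(x+y, z) &= f(x,z) + f(y,z) \quad\text{and}\\
f(x, y+z) &= f(x,y) + f(x,z),
\end{align*}
because in \eqref{CocycleFull2} the right-hand side $g(x \cdot y, x \cdot z) - g(y,z) = g(y,z) - g(y,z)$ vanishes once $x \cdot y = y$ and $x \cdot z = z$. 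Thus $f$ is additive in each variable, i.e. $f \in \Bilin((X,+) \times (X,+), A)$ (biadditivity forces $f(0,z) = f(x,0) = 0$, so $f$ is automatically normalised). Conditions \eqref{g-symmetric} and \eqref{CocycleFull3} are untouched by the substitution and say precisely that $g$ is a normalised symmetric group $2$-cocycle of $(X,+)$ with trivial coefficients in $A$. Crucially, no condition links $f$ and $g$ any more, so as abelian groups
$$\Z_\Norm^2(X; A) \;=\; \Bilin((X,+) \times (X,+), A) \;\times\; \Zg^2_{sym}((X,+); A),$$
the identification being essentially the identity $(f,g) \mapsto (f,g)$, and a homomorphism because addition of cocycles is pointwise.

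Next I would identify the coboundaries inside this product. For a normalised $\lambda \colon X \to A$, condition \eqref{Coboundary1} gives $f(x,y) = \lambda(x \cdot y) - \lambda(y) = \lambda(y) - \lambda(y) = 0$, while \eqref{Coboundary2} gives $g(x,y) = \lambda(x+y) - \lambda(x) - \lambda(y)$, the usual group $2$-coboundary. Hence under the identification above
$$\B_\Norm^2(X; A) \;=\; \{0\} \times \Bg^2_{sym}((X,+); A),$$
where I am also using that, with trivial coefficients over the abelian group $(X,+)$, every such $\lambda(x+y) - \lambda(x) - \lambda(y)$ is automatically symmetric, so these coboundaries genuinely lie in the symmetric subgroup. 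Passing to the quotient then yields
$$\Ho_\Norm^2(X; A) \;\cong\; \Bilin((X,+) \times (X,+), A) \;\times\; \big(\Zg^2_{sym}((X,+); A)\big/\Bg^2_{sym}((X,+); A)\big).$$

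The last step, and the only point needing a little care rather than pure substitution, is to identify the second factor with $\Hog^2_{sym}((X,+); A)$ as defined in the excerpt, namely the subgroup of $\Hog^2((X,+); A)$ consisting of classes admitting a symmetric representative. The natural map $\Zg^2_{sym}/\Bg^2_{sym} \to \Hog^2((X,+);A)$ sending $g$ to its ordinary cohomology class has image exactly those classes by definition; I would check it is injective by observing that two symmetric cocycles differing by an arbitrary coboundary in fact differ by a symmetric one, which holds precisely because every $2$-coboundary of an abelian group with trivial coefficients is symmetric, as noted above. Consequently $\Zg^2_{sym}/\Bg^2_{sym} \cong \Hog^2_{sym}((X,+); A)$, and combining with the previous display gives the claimed isomorphism. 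I expect this identification of the symmetric-cohomology factor to be the only genuinely non-formal step; the decoupling of $f$ and $g$ and the vanishing of $f$ on coboundaries are immediate once $x \cdot y = y$ is substituted.
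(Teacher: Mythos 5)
Your proof is correct and follows essentially the same route as the paper: substitute $x \cdot y = y$ into conditions \eqref{g-symmetric}--\eqref{CocycleFull2} to decouple $(f,g)$ into a biadditive $f$ and a symmetric group $2$-cocycle $g$, then observe that coboundaries have $f = 0$ and $g$ an ordinary group coboundary. Your explicit verification that $\Zg^2_{sym}/\Bg^2_{sym}$ agrees with $\Hog^2_{sym}((X,+);A)$ (via the symmetry of all coboundaries for trivial coefficients over an abelian group) is a point the paper leaves implicit, but it is a refinement of the same argument, not a different approach.
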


\begin{proof}
We begin by noting that if  $f,g \colon X \times X \to A$ is a 2-cocycle of the trivial linear cycle set $(X, \cdot, +)$, then conditions \eqref{g-symmetric}-\eqref{CocycleFull3} imply that $g$ is a group theoretical symmetric 2-cocycle, whereas conditions  \eqref{CocycleFull}-\eqref{CocycleFull2} imply that $f$ is a bilinear map, that is, $f \in \Bilin((X,+) \times (X,+), A)$. Further, by conditions \eqref{Coboundary1}-\eqref{Coboundary2}, $(f, g)$ is a 2-coboundary if there exists a map $\lambda \colon X \to A$ such that $$	f(x,y) =0$$ and $$g(x,y) =\lambda (x + y) - \lambda(x) - \lambda(y)$$
for all $x, y \in X$. Thus, it follows that $$\Ho_{\Norm}^2(X; A) \cong \Bilin((X,+) \times (X,+), A) \times \Hog^2_{sym}((X, +); A),$$
and the map $\Lambda$ is simply projection onto the second factor.
\end{proof}

Given two abelian groups $(X, +)$ and $A$ where $A$ is viewed as a trivial $(X,+)$-module, it follows from Proposition \ref{trivial-cycle-group-cohomology} that the group $\Bilin((X,+) \times (X,+), A) \times \Hog^2_{sym}((X,+); A)$ classifies meta-trivial linear cycle sets, that is, extensions of a trivial linear cycle set by a trivial linear cycle set. 

\begin{question}
What can we say about the homomorphism $\Lambda$ for non-trivial linear cycle sets?
\end{question}
\bigskip

\section{Action of automorphisms on cohomology of linear cycle sets}\label{sec-action-auto-cohomo}

Let $(X, \cdot, +)$ be a linear cycle set and  $A$ an abelian group. Let $\Aut(X)$ denote the group of all linear cycle set automorphisms of $X$ and $\Autg(A)$ the usual automorphism group of $A$. For  $(\phi, \theta) \in \Aut(X) \times \Autg(A)$ and $(f, g) \in \Z_\Norm^2(X; A)$, we define 
$$^{(\phi, \theta)}(f, g)=\big(^{(\phi, \theta)}f, ~^{(\phi, \theta)}g \big),$$
where $^{(\phi, \theta)}f(x, y):=\theta \big( f \big(\phi^{-1}(x), \phi^{-1}(y) \big)\big)$ and $^{(\phi, \theta)}g(x, y):=\theta \big( g \big(\phi^{-1}(x), \phi^{-1}(y) \big)\big)$ for all $x, y \in X$.

\begin{proposition}
The group $\Aut(X) \times \Autg(A)$ acts by automorphisms on the group $\Ho_\Norm^2(X; A)$ as
$$^{(\phi, \theta)}[f, g]= [^{(\phi, \theta)}(f, g)]$$
for $(\phi, \theta) \in \Aut(X) \times \Autg(A)$ and $(f, g) \in \Z_\Norm^2(X; A)$.
\end{proposition}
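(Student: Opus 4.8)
The plan is to verify that the formula $^{(\phi,\theta)}[f,g]=[^{(\phi,\theta)}(f,g)]$ gives a well-defined group action by automorphisms. This breaks into three independent checks: first that $^{(\phi,\theta)}(f,g)$ is again a normalised $2$-cocycle whenever $(f,g)$ is; second that the assignment respects coboundaries so that it descends to cohomology classes; and third that $(\phi,\theta)\mapsto {}^{(\phi,\theta)}(-)$ satisfies the axioms of a left action and that each individual $^{(\phi,\theta)}$ is a group automorphism of $\Ho^2_\Norm(X;A)$.

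First I would show $^{(\phi,\theta)}(f,g)\in\Z^2_\Norm(X;A)$. Each of the cocycle conditions \eqref{g-symmetric}--\eqref{CocycleFull2} for the pair $(^{(\phi,\theta)}f,{}^{(\phi,\theta)}g)$ is obtained by applying $\theta$ to the corresponding condition for $(f,g)$, after substituting $\phi^{-1}(x),\phi^{-1}(y),\phi^{-1}(z)$ for $x,y,z$. The point is that $\phi\in\Aut(X)$ commutes with both operations, so $\phi^{-1}(x)\cdot\phi^{-1}(y)=\phi^{-1}(x\cdot y)$ and $\phi^{-1}(x)+\phi^{-1}(y)=\phi^{-1}(x+y)$; this lets every argument of the shape $x\cdot y$ or $x+y$ appearing in the cocycle identities pass correctly through $\phi^{-1}$. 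Applying the additive automorphism $\theta$ to an identity in $A$ yields the same identity among the transformed values, since $\theta$ preserves sums and differences. Normalisation is immediate: since $\phi^{-1}(0)=0$ and $\theta(0)=0$, we get $^{(\phi,\theta)}g(0,0)=\theta(g(0,0))=0$.

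Next I would check compatibility with coboundaries. If $(f,g)\in\B^2_\Norm(X;A)$ comes from $\lambda\colon X\to A$ with $\lambda(0)=0$ via \eqref{Coboundary1}--\eqref{Coboundary2}, then I claim $^{(\phi,\theta)}(f,g)$ is the coboundary of $\lambda'=\theta\circ\lambda\circ\phi^{-1}$, which again satisfies $\lambda'(0)=0$. This follows by the same substitution-and-apply argument: $^{(\phi,\theta)}f(x,y)=\theta\big(\lambda(\phi^{-1}(x)\cdot\phi^{-1}(y))-\lambda(\phi^{-1}(y))\big)=\lambda'(x\cdot y)-\lambda'(y)$, using $\phi^{-1}(x)\cdot\phi^{-1}(y)=\phi^{-1}(x\cdot y)$ and additivity of $\theta$, and similarly for $g$. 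Hence $^{(\phi,\theta)}$ maps $\B^2_\Norm$ into $\B^2_\Norm$ and the induced map on $\Ho^2_\Norm(X;A)$ is well defined.

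Finally I would verify the action axioms. That $^{(\id,\id)}$ is the identity is immediate. For composition one computes $^{(\phi_1,\theta_1)}\big(^{(\phi_2,\theta_2)}(f,g)\big)={}^{(\phi_1\phi_2,\theta_1\theta_2)}(f,g)$, which follows from $(\phi_1\phi_2)^{-1}=\phi_2^{-1}\phi_1^{-1}$ together with functoriality of the $\theta$-prefactor; the one potential sign of trouble here is bookkeeping the order of inverses, but with left translations written as $x\mapsto\phi^{-1}(x)$ the composition works out to a genuine left action. That each $^{(\phi,\theta)}$ is a group homomorphism of $\Ho^2_\Norm(X;A)$ is the easiest part and is exactly the reason stated in the proof of Proposition \ref{homo-cycle-group-cohomology}: addition of cocycles is pointwise, and both $\theta$ and precomposition with $\phi^{-1}$ are additive, so $^{(\phi,\theta)}$ distributes over the sum of cohomology classes; being an invertible self-map, it is an automorphism. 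I expect no serious obstacle; the main care is in the first step, ensuring that every operation-argument in the four cocycle identities is correctly transported by the linear-cycle-set automorphism $\phi$, and this is where I would write out at least \eqref{CocycleFull2} in full to confirm the pattern.
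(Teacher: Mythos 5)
Your proposal is correct and takes essentially the same route as the paper's proof: you verify the four cocycle conditions by substituting $\phi^{-1}(x),\phi^{-1}(y),\phi^{-1}(z)$ and applying $\theta$, show coboundaries are preserved via the same auxiliary map $\lambda'=\theta\circ\lambda\circ\phi^{-1}$, and check the composition law $^{(\phi_1\phi_2,\theta_1\theta_2)}(f,g)={}^{(\phi_1,\theta_1)}\big({}^{(\phi_2,\theta_2)}(f,g)\big)$ exactly as the paper does. The only difference is that the paper writes out the cocycle verifications in full while you argue them by the uniform substitution pattern, which is a legitimate compression of the same argument.
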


\begin{proof}
For $(\phi, \theta) \in \Aut(X) \times \Autg(A)$ and $(f, g) \in \Z_\Norm^2(X; A)$, we first show that $^{(\phi, \theta)}(f, g)$ is a normalised 2-cocycle of the cycle set $X$. For $x, y \in X$, we have
\begin{eqnarray*}
^{(\phi, \theta)}f(x+y, z) &=& \theta \big( f \big(\phi^{-1}(x+y), \phi^{-1}(z) \big)\big)\\
&=& \theta \big( f \big(\phi^{-1}(x)+\phi^{-1}(y), \phi^{-1}(z) \big)\big)\\
&=& \theta \big( f \big(\phi^{-1}(x)\cdot \phi^{-1}(y), \phi^{-1}(x)\cdot \phi^{-1}(z) \big)\big) + \theta \big(f \big(\phi^{-1}(x), \phi^{-1}(z) \big)\big)~\textrm{due to}~\eqref{CocycleFull}\\
&=& \theta \big( f \big(\phi^{-1}(x \cdot y), \phi^{-1}(x \cdot z) \big)\big) + \theta \big(f \big(\phi^{-1}(x), \phi^{-1}(z) \big)\big) \\
&=& ^{(\phi, \theta)}f(x \cdot y, x \cdot z) +~ ^{(\phi, \theta)}f(x, z),
\end{eqnarray*}

\begin{eqnarray*}
&& ^{(\phi, \theta)}f(x, y+z)- ~^{(\phi, \theta)}f(x, y)- ~^{(\phi, \theta)}f(x, z)\\
&=& \theta \big( f \big(\phi^{-1}(x), \phi^{-1}(y)+\phi^{-1}(z)\big) - f \big( \phi^{-1}(x),  \phi^{-1}(y) \big)-f \big( \phi^{-1}(x),  \phi^{-1}(z) \big)\big)\\
&=& \theta \big( g \big(\phi^{-1}(x) \cdot \phi^{-1}(y), \phi^{-1}(x) \cdot \phi^{-1}(z)\big) - g \big( \phi^{-1}(y),  \phi^{-1}(z) \big)\big)~\textrm{due to}~\eqref{CocycleFull2}\\
&=& \theta \big( g \big(\phi^{-1}(x \cdot y), \phi^{-1}(x \cdot z)\big)\big) - \theta \big(g \big( \phi^{-1}(y),  \phi^{-1}(z) \big)\big)\\
&=& ^{(\phi, \theta)}g(x \cdot y, x \cdot z)-~ ^{(\phi, \theta)}g(y, z)
\end{eqnarray*}
and
\begin{eqnarray*}
^{(\phi, \theta)}g(x,y)+~^{(\phi, \theta)}g(x+y,z) &=& \theta \big( g\big(\phi^{-1}(x), \phi^{-1}(y) \big)+ g\big(\phi^{-1}(x)+\phi^{-1}(y), \phi^{-1}(z) \big) \big)\\
&=& \theta \big( g\big(\phi^{-1}(y), \phi^{-1}(z) \big)+ g\big(\phi^{-1}(x), \phi^{-1}(y)+\phi^{-1}(z) \big) \big)~\textrm{due to}~\eqref{CocycleFull3}\\
&=& \theta \big( g\big(\phi^{-1}(y), \phi^{-1}(z) \big)\big)+ \theta \big(g\big(\phi^{-1}(x), \phi^{-1}(y+z) \big) \big)\\
&=& ^{(\phi, \theta)}g(y, z) + ~^{(\phi, \theta)}g(x, y+ z).
\end{eqnarray*}
Further, $^{(\phi, \theta)}g(x,y)=~^{(\phi, \theta)}g(y,x)$ and $^{(\phi, \theta)}g(0, 0)=0$. Hence, $^{(\phi, \theta)}(f, g) \in \Z_\Norm^2(X; A)$. Now, given $(\phi_i, \theta_i) \in \Aut(X) \times \Autg(A)$ for $i=1,2$, we see that
\begin{eqnarray*}
^{(\phi_1, \theta_1)(\phi_2, \theta_2)}f(x, y) &=& ^{(\phi_1\phi_2, \theta_1\theta_2)}f(x, y)\\
&=& \theta_1\theta_2 \big( f \big(\phi_2^{-1}\phi_1^{-1}(x), \phi_2^{-1}\phi_1^{-1}(y)\big) \big)\\
&=& \theta_1 \big(~^{(\phi_2, \theta_2)}f \big(\phi_1^{-1}(x), \phi_1^{-1}(y)\big) \big)\\
&=& ^{(\phi_1, \theta_1)}\big({^{(\phi_2, \theta_2)}f}\big)(x, y) 
\end{eqnarray*}
for all $x, y \in X$. Similarly, one can show that $^{(\phi_1, \theta_1)(\phi_2, \theta_2)}g(x, y)=~^{(\phi_1, \theta_1)}\big({^{(\phi_2, \theta_2)}g}\big)(x, y)$. It can now easily be deduced that the group
$\Aut(X) \times \Autg(A)$ acts on $\Z_\Norm^2(X; A)$ by automorphisms. It only remains to be shown that the action preserves $\B_\Norm^2(X; A)$. Let $(f, g) \in \B_\Norm^2(X; A)$. Then there exists $\lambda: X \to A$ such that conditions \eqref{Coboundary1} and \eqref{Coboundary2} holds. For $x, y \in X$, we have
\begin{eqnarray*}
^{(\phi, \theta)}f(x,y) &=& \theta \big( f \big(\phi^{-1}(x), \phi^{-1}(y) \big)\big)\\
&=& \theta \big( \lambda \big(\phi^{-1}(x) \cdot \phi^{-1}(y) \big) -\lambda \big(\phi^{-1}(y)\big) \big)\\
&=& \theta \big( \lambda \big(\phi^{-1}(x \cdot y) \big) \big) - \theta \big( \lambda \big(\phi^{-1}(y)\big) \big)\\
&=& \lambda'(x \cdot y)- \lambda'(y)
\end{eqnarray*}
and
\begin{eqnarray*}
^{(\phi, \theta)}g(x,y) &=& \theta \big( \lambda \big(\phi^{-1}(x) + \phi^{-1}(y) \big) -\lambda \big(\phi^{-1}(x) \big)-\lambda \big(\phi^{-1}(y) \big) \big)\\
&=& \lambda'(x+y)-\lambda'(x)-\lambda'(y),
\end{eqnarray*}
where $\lambda'=\theta \lambda \phi^{-1}: X \to A$. Hence,  $^{(\phi, \theta)}(f, g) \in \B_\Norm^2(X; A)$ and we are done.
\end{proof}

Applying the orbit-stabiliser theorem to the action of $\Aut(X) \times \Autg(A)$ on $\Ho_\Norm^2(X; A)$ yields

\begin{corollary}\label{cor-order-cohom}
If $X$ is a finite linear cycle set, $A$ a finite abelian group and $(f, g)\in \Z_\Norm^2(X; A)$, then 
$$|\Ho_\Norm^2(X; A)| \ge \frac{|\Aut(X) \times \Autg(A)|}{|{(\Aut(X) \times \Autg(A))}_{[f, g]}|},$$
where ${(\Aut(X) \times \Autg(A))}_{[f, g]}$ is the stabiliser subgroup of $\Aut(X) \times \Autg(A)$ at $[f, g]$.
\end{corollary}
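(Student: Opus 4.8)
The plan is to invoke the group action constructed in the preceding proposition and to apply the orbit--stabiliser theorem. First I would record the role of the finiteness hypotheses: since $X$ and $A$ are finite, the acting group $\Aut(X) \times \Autg(A)$ is finite, and $\Ho_\Norm^2(X; A)$ is a finite set (being a quotient of a subgroup of the finite group of maps $X \times X \to A$ twice over). Hence every cardinality appearing in the statement is a well-defined finite integer, and in particular the quotient on the right-hand side makes sense.

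Next I would consider the orbit of the class $[f,g]$ under the action established above, namely the action sending $(\phi,\theta)$ and $[f,g]$ to $[\,^{(\phi, \theta)}(f, g)\,]$. By the orbit--stabiliser theorem applied to this action of the finite group $\Aut(X) \times \Autg(A)$, the cardinality of the orbit of $[f,g]$ equals the index of its stabiliser, that is
$$
\frac{|\Aut(X) \times \Autg(A)|}{|(\Aut(X) \times \Autg(A))_{[f, g]}|}.
$$
Since the orbit of $[f,g]$ is by construction a subset of $\Ho_\Norm^2(X; A)$, its cardinality is at most $|\Ho_\Norm^2(X; A)|$, which yields precisely the asserted inequality.

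There is essentially no genuine obstacle here, as the substantive work was already discharged in the preceding proposition, where it was verified that $\Aut(X) \times \Autg(A)$ acts by automorphisms on $\Ho_\Norm^2(X; A)$. The only points meriting a word of care are that the finiteness assumptions are what legitimise the orbit--stabiliser counting (guaranteeing the acting group is finite so the index is an integer) and that the inequality, rather than an equality, is forced because the orbit need not exhaust the whole cohomology group.
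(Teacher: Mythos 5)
Your proof is correct and is exactly the argument the paper intends: the corollary is stated there as an immediate application of the orbit--stabiliser theorem to the action of $\Aut(X) \times \Autg(A)$ on $\Ho_\Norm^2(X; A)$, with the inequality coming from the orbit of $[f,g]$ being a subset of the cohomology group. Your added remarks on why finiteness makes the counting legitimate are a sensible elaboration of what the paper leaves implicit.
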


In general, we have  $\Aut(X) \le \Autg((X, +))$ for any linear cycle set  $(X, \cdot, +)$ and the equality holds for trivial linear cycle sets. Corollary \ref{cor-order-cohom} and Proposition \ref{trivial-cycle-group-cohomology} then yields
the following.

\begin{corollary}
If $G$ and $A$ are finite abelian groups with $A$ viewed as a trivial $G$-module and $(f, g)\in \Z_\Norm^2(G; A)$, then 
$$|\Hog_{sym}^2(G; A)| \ge \frac{|\Autg(G) \times \Autg(A)|}{|\Bilin(G \times G, A)|~|{(\Autg(G) \times \Autg(A))}_{[f, g]}|},$$
where ${(\Autg(G) \times \Autg(A))}_{[f, g]}$ is the stabiliser subgroup of $\Autg(G) \times \Autg(A)$ at $[f, g]$.
\end{corollary}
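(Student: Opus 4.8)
The plan is to specialise the two immediately preceding results to the case of a \emph{trivial} linear cycle set. First I would regard $G$ as a linear cycle set $X$ by equipping it with the trivial operation $x \cdot y = y$, so that $(X, \cdot, +)$ is a trivial linear cycle set whose underlying abelian group is $G$. As noted in the remark just before the statement, for a trivial linear cycle set every linear cycle set automorphism of $X$ is exactly an automorphism of $(G,+)$ and conversely, so $\Aut(X) = \Autg(G)$. Consequently the group acting on $\Ho_\Norm^2(X; A)$ via the action constructed in this section is precisely $\Autg(G) \times \Autg(A)$, and the class $[f,g]$ in the statement is a well-defined element of $\Ho_\Norm^2(X;A)$ since $(f,g) \in \Z_\Norm^2(G;A)$.

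Next I would apply Corollary \ref{cor-order-cohom} to this $X$. Since $G$ and $A$ are finite, the orbit--stabiliser bound reads
$$|\Ho_\Norm^2(X; A)| \ge \frac{|\Autg(G) \times \Autg(A)|}{|(\Autg(G) \times \Autg(A))_{[f, g]}|},$$
where the stabiliser is taken with respect to the action on the whole group $\Ho_\Norm^2(X;A)$. The point of the argument is then to rewrite the left-hand side. By Proposition \ref{trivial-cycle-group-cohomology}, applied to the trivial linear cycle set $X$, there is an isomorphism
$$\Ho_{\Norm}^2(X; A) \cong \Bilin(G \times G, A) \times \Hog^2_{sym}((X, +); A),$$
and since all groups involved are finite this yields the cardinality identity $|\Ho_\Norm^2(X;A)| = |\Bilin(G \times G, A)| \cdot |\Hog^2_{sym}(G; A)|$. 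Substituting this into the orbit--stabiliser inequality and dividing both sides by the finite nonzero cardinality $|\Bilin(G \times G, A)|$ produces exactly the asserted lower bound on $|\Hog_{sym}^2(G; A)|$.

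There is no genuinely hard step here; the corollary is a direct combination of the two earlier results. The only subtlety worth flagging is interpretive: the stabiliser subgroup appearing in the final inequality must be read as the stabiliser of $[f,g]$ for the action on the full cohomology group $\Ho_\Norm^2(X;A)$, which is what Corollary \ref{cor-order-cohom} supplies, and \emph{not} as a stabiliser for some induced action on the $\Hog^2_{sym}$ factor alone. I would therefore avoid claiming that the decomposition of Proposition \ref{trivial-cycle-group-cohomology} is equivariant; I only transport the cardinality identity through the inequality, leaving the stabiliser precisely as produced by the orbit--stabiliser theorem. With that understanding, no further compatibility between the two results beyond the equality of orders is required, and the bound follows at once.
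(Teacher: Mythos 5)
Your proof is correct and takes essentially the same route as the paper, which derives the corollary precisely by specialising Corollary \ref{cor-order-cohom} to the trivial linear cycle set structure on $G$ (where $\Aut(X)=\Autg(G)$) and substituting the cardinality identity $|\Ho_\Norm^2(X;A)| = |\Bilin(G \times G, A)|\cdot|\Hog^2_{sym}(G;A)|$ from Proposition \ref{trivial-cycle-group-cohomology}. Your closing remark---that the stabiliser must be read with respect to the action on the full group $\Ho_\Norm^2(X;A)$ rather than on the $\Hog^2_{sym}$ factor alone---is a correct clarification of a point the paper leaves implicit.
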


\medskip

\section{An exact sequence relating automorphisms and cohomology}\label{sec-exact-seq}
Let $(X, \cdot, +)$ be a linear cycle set,  $A$ an abelian group and 
\begin{equation}\label{central-extension}
\mathcal{E}: 0 \to A \overset{i}{\to} E \overset{\pi}{\to} X \to 0
\end{equation}
a central extension of $X$ by $A$. In view of Lemma \ref{extension-to-cocycle}, there exists a normalised 2-cocycle $(f, g)$ such that $$E \cong X \oplus_{f,g} A.$$ Further, the extension $\mathcal{E}$ determines a unique cohomology class $[f, g] \in \Ho_\Norm^2(X; A)$.
\par

Fix a central extension \eqref{central-extension} of a linear cycle set $(X, \cdot, +)$ by an abelian group $A$ and its corresponding cohomology class $[f, g] \in \Ho_\Norm^2(X; A)$ as in Lemma \ref{extension-to-cocycle}. For each $(\phi, \theta) \in \Aut(X) \times \Autg(A)$, we have $^{(\phi, \theta)}[f, g] \in \Ho_\Norm^2(X; A)$. Since the group $\Ho_\Norm^2(X; A)$ acts freely and transitively on itself by (left) translation, there exists a unique element $\Theta_{[f, g]} (\phi, \theta)  \in \Ho_\Norm^2(X; A)$ such that 
$$\Theta_{[f, g]} (\phi, \theta)  +~ ^{(\phi, \theta)}[f, g]= [f, g].$$
This gives a map
\begin{equation}\label{wells-map}
\Theta_{[f, g]}:  \Aut(X) \times \Autg(A) \to \Ho_\Norm^2(X; A).
\end{equation}
 We denote $\Theta_{[f, g]}$ by $\Theta$ for convenience of notation. Our aim is to relate certain group of automorphisms of  $E$ to groups $\Aut(X)$, $\Autg(A)$, $\Ho_\Norm^2(X; A)$ and $\Z_\Norm^1(X; A)$. For this purpose, we define
\begin{small}
$$\Aut_A(E) = \big\{\psi \in \Aut(E)~|~ \psi(x, a)=(\phi(x), \lambda(x) +\theta(a))~\textrm{for some}~(\phi, \theta) \in \Aut(X) \times \Aut(A)~\textrm{and map}~\lambda: X \to A \big\} .$$
\end{small} 

\begin{proposition}
$\Aut_A(E)$ is a subgroup of $\Aut(E)$.
\end{proposition}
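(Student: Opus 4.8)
The plan is to verify the subgroup criterion directly by exhibiting the identity and checking closure under composition and inversion. Since every element of $\Aut_A(E)$ is by definition already a genuine automorphism of the linear cycle set $E$, the composite and the inverse of such maps lie automatically in $\Aut(E)$; hence the only thing requiring proof is that the parametrised form $\psi(x,a) = (\phi(x), \lambda(x) + \theta(a))$, with $(\phi,\theta) \in \Aut(X) \times \Autg(A)$ and $\lambda \colon X \to A$ a map, is preserved under these operations. To start, the identity $\id_E$ lies in $\Aut_A(E)$, since it has the required shape with $\phi = \id_X$, $\theta = \id_A$ and $\lambda \equiv 0$; in particular $\Aut_A(E)$ is non-empty.

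For closure under composition I would take $\psi_i(x,a) = (\phi_i(x), \lambda_i(x) + \theta_i(a))$ for $i = 1, 2$ and compute $\psi_1 \circ \psi_2$ explicitly, using that each $\theta_i$ is additive. A short calculation gives
\[
(\psi_1 \circ \psi_2)(x,a) = \big(\phi_1\phi_2(x),\ \lambda_1(\phi_2(x)) + \theta_1(\lambda_2(x)) + \theta_1\theta_2(a)\big),
\]
which is again of the required form: the first component uses $\phi_1\phi_2 \in \Aut(X)$, the coefficient of $a$ is $\theta_1\theta_2 \in \Autg(A)$ (both automorphism groups being closed under products), and $x \mapsto \lambda_1(\phi_2(x)) + \theta_1(\lambda_2(x))$ is a perfectly well-defined map $X \to A$. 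Hence $\psi_1 \circ \psi_2 \in \Aut_A(E)$.

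For closure under inversion I would take $\psi(x,a) = (\phi(x), \lambda(x) + \theta(a))$ and search for an inverse of the same shape, namely $\psi^{-1}(x,a) = (\phi^{-1}(x), \mu(x) + \theta^{-1}(a))$ for a suitable map $\mu \colon X \to A$. Imposing $\psi \circ \psi^{-1} = \id_E$ on the second coordinate forces $\lambda(\phi^{-1}(x)) + \theta(\mu(x)) = 0$, whence $\mu = -\theta^{-1}\lambda\phi^{-1}$, and one checks that this same choice also satisfies $\psi^{-1} \circ \psi = \id_E$. Since $\phi^{-1} \in \Aut(X)$, $\theta^{-1} \in \Autg(A)$ and $\mu$ is a map $X \to A$, we conclude $\psi^{-1} \in \Aut_A(E)$.

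I do not anticipate a genuine obstacle here: the content is entirely the bookkeeping that the three data $(\phi, \theta, \lambda)$ recombine correctly, together with the stability of membership in $\Aut(X)$ and $\Autg(A)$ under products and inverses. The only point deserving care is to remember that the homomorphism and bijectivity conditions defining $\Aut(E)$ are inherited automatically from $\psi_1,\psi_2$ (and from $\psi$), so that one is genuinely reduced to matching the parametrised form and need not re-verify the linear cycle set axioms for the composite or the inverse.
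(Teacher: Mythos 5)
Your proof is correct and follows essentially the same route as the paper: both verify closure of the parametrised form under composition, obtaining $\lambda(x)=\lambda_1(\phi_2(x))+\theta_1(\lambda_2(x))$, and under inversion, where your $\mu=-\theta^{-1}\lambda\phi^{-1}$ agrees with the paper's formula $\psi^{-1}(x,a)=\bigl(\phi^{-1}(x),\,\theta^{-1}\bigl(-\lambda(\phi^{-1}(x))\bigr)+\theta^{-1}(a)\bigr)$ since $\theta^{-1}$ is additive. Your explicit remarks that $\id_E$ lies in $\Aut_A(E)$ and that the automorphism property of composites and inverses is inherited automatically are sensible touches the paper leaves implicit, but the argument is the same.
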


\begin{proof}
Let $\psi_i(x, a)=(\phi_i(x), \lambda_i(x) +\theta_i(a))$ for $i=1, 2$. Then
\begin{eqnarray}
\nonumber \psi_1\psi_2 (x, a) &=&\psi_1 \big(\phi_2(x), ~\lambda_2(x) +\theta_2(a) \big)\\
\nonumber  &=&\big(\phi_1\phi_2(x), ~ \lambda_1\big(\phi_2(x)\big) + \theta_1\big(\lambda_2(x)\big) +\theta_1\theta_2(a) \big)\\
&=& \big(\phi_1\phi_2(x), ~ \lambda(x) +\theta_1\theta_2(a) \big)\label{psi1-psi2},
\end{eqnarray}
where $\lambda: X \to A$ is given by 
\begin{equation}\label{lambda1-lambda2}
\lambda(x)=\lambda_1\big(\phi_2(x)\big) + \theta_1\big(\lambda_2(x)\big)
\end{equation}
for $x \in X$ and $a \in A$. Consequently, $\Aut_A(E)$ is closed under composition. It remains to show that if $\psi \in \Aut_A(E)$, then $\psi^{-1} \in \Aut_A(E)$. But a direct computation shows that if $\psi(x, a)=(\phi(x), \lambda(x) +\theta(a))$, then $$\psi^{-1}(x, a)=\big(\phi^{-1}(x),~ \theta^{-1}\big(-\lambda(\phi^{-1}(x))\big) + \theta^{-1}(a) \big),$$ and hence $\Aut_A(E)$ is a subgroup of $\Aut(E)$.
\end{proof}

\begin{proposition}\label{condition for automorphism}
Let $(\phi, \theta) \in \Aut(X) \times \Autg(A)$ and $\lambda: X \to A$ a map. Then the map $\psi:E \to E $ given by $\psi(x, a)=(\phi(x), \lambda(x) +\theta(a))$ is an automorphism of $E$ if and only if 
\begin{equation}\label{homo-eq1}
\lambda(x + y) + \theta\big(g(x, y)\big) = \lambda(x) +\lambda(y) +  g\big(\phi(x), \phi(y)\big)
\end{equation}
and 
\begin{equation}\label{homo-eq2}
\lambda(x \cdot y) + \theta\big(f(x, y)\big)= \lambda(y)  + f\big(\phi(x), \phi(y)\big)
\end{equation}
for all $x, y \in X$. 
\end{proposition}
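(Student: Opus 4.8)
The plan is to exploit the explicit description $E \cong X \oplus_{f,g} A$ furnished by Lemma \ref{linear-cycle-structure} and to reduce the automorphism condition to two independent homomorphism conditions, one for each operation. First I would observe that $\psi$ is automatically a bijection: since $\phi \in \Aut(X)$ and $\theta \in \Autg(A)$ are invertible, the set-theoretic inverse $\psi^{-1}(x,a)=\big(\phi^{-1}(x),~ \theta^{-1}(-\lambda(\phi^{-1}(x))) + \theta^{-1}(a)\big)$ exhibited in the preceding proposition is well defined for \emph{any} map $\lambda$, with no constraint required. Hence $\psi$ is an automorphism of $E$ if and only if it is a homomorphism with respect to both $+$ and $\cdot$, and these two requirements can be analysed separately.

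Next I would verify the additive part. Using $(x,a)+(y,b)=\big(x+y, a+b+g(x,y)\big)$ from Lemma \ref{linear-cycle-structure}, together with $\phi(x+y)=\phi(x)+\phi(y)$ and the additivity of $\theta$, I would expand both $\psi\big((x,a)+(y,b)\big)$ and $\psi(x,a)+\psi(y,b)$. The first coordinates agree automatically because $\phi$ is additive; comparing the second coordinates and cancelling the common term $\theta(a)+\theta(b)$ leaves exactly \eqref{homo-eq1}. Crucially, the variables $a$ and $b$ drop out, so that preservation of $+$ on all of $E$ is equivalent to \eqref{homo-eq1} holding for all $x,y \in X$.

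The multiplicative part is entirely analogous. Here I would use $(x,a)\cdot(y,b)=\big(x\cdot y, b+f(x,y)\big)$, noting that the second coordinate depends only on the second component $b$ of the right-hand factor and not on $a$. Expanding $\psi\big((x,a)\cdot(y,b)\big)$ and $\psi(x,a)\cdot\psi(y,b)$ with $\phi(x\cdot y)=\phi(x)\cdot\phi(y)$ and $\theta$ additive, the first coordinates again match, and comparing the second coordinates and cancelling $\theta(b)$ yields precisely \eqref{homo-eq2}. Combining the two equivalences gives the statement.

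I do not expect a serious obstacle here; the argument is a direct unwinding of the two operations on $X \oplus_{f,g} A$. The only points requiring care are, first, recognising that bijectivity is free so that the automorphism condition really is just a pair of homomorphism conditions, and second, keeping track of which components each operation actually involves — in particular that $\cdot$ on $E$ ignores the first slot of its second factor — so that the cancellation of $a$ and $b$ is legitimate and the resulting identities can be stated in $x,y$ alone.
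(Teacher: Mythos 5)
Your proposal is correct and follows essentially the same route as the paper: a direct expansion of both operations on $X \oplus_{f,g} A$, comparison of second coordinates, and cancellation of the terms involving $a$ and $b$. Your explicit observation that bijectivity is automatic for any $\lambda$ (via the set-theoretic inverse) is a point the paper's proof leaves implicit, and it slightly improves the completeness of the argument without changing its substance.
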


\begin{proof}
A direct computation shows that
\begin{eqnarray*}
\psi \big((x, a) +(y, b) \big) &=& \psi \big(x + y,~ a + b + g(x, y) \big)\\
&=& \big(\phi(x + y), ~\lambda(x + y) + \theta\big(a + b + g(x, y)\big) \big),\\
& &\\
\psi(x, a) +\psi (y, b) &=& \big(\phi(x), ~\lambda(x) +\theta(a)\big) + \big(\phi(y), \lambda(y) +\theta(b) \big)\\
&=& \big(\phi(x) + \phi(y),~ \lambda(x) +\lambda(y) + \theta(a) + \theta(b) + g\big(\phi(x), \phi(y)\big)\big),\\
& &\\
\psi \big((x, a) \cdot (y, b) \big) &=& \psi \big(x \cdot y, ~b + f(x, y) \big)\\
&=& \big(\phi(x \cdot y), ~\lambda(x \cdot y) + \theta\big(b + f(x, y)\big) \big),~\textrm{and} \\
& &\\
\psi(x, a) \cdot \psi (y, b) &=& \big(\phi(x), ~\lambda(x) +\theta(a)\big) \cdot \big(\phi(y), \lambda(y) +\theta(b) \big)\\
&=& \big(\phi(x) \cdot \phi(y), ~\lambda(y) + \theta(b) + f\big(\phi(x), \phi(y)\big)\big).
\end{eqnarray*}
The result now follows immediately from the preceding equalities.
\end{proof}

In view of \eqref{psi1-psi2}, the map
$$\Psi: \Aut_A(E) \to \Aut(X) \times \Autg(A)$$
given by 
$\Psi (\psi)=(\phi, \theta)$  is a group homomorphism.

\begin{proposition}\label{im-phi-ker-theta}
$\im(\Psi)=\Theta^{-1}\{0\}$.
\end{proposition}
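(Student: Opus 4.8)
The plan is to identify both sides of the claimed equality with the same set of pairs $(\phi, \theta) \in \Aut(X) \times \Autg(A)$, namely those for which the two normalised cocycles $(f,g)$ and $^{(\phi,\theta)}(f,g)$ are cohomologous, and to pass between the automorphism condition of Proposition \ref{condition for automorphism} and the coboundary condition via the substitution $\lambda = \nu \circ \phi$. First I would unwind the definition of $\Theta$. Since $\Ho_\Norm^2(X;A)$ acts on itself freely and transitively by translation, the defining relation $\Theta(\phi, \theta) +\, {}^{(\phi,\theta)}[f,g] = [f,g]$ gives $\Theta(\phi,\theta) = [f,g] -\, {}^{(\phi,\theta)}[f,g]$, which is the class of the cocycle $\big(f -\, {}^{(\phi,\theta)}f,\; g -\, {}^{(\phi,\theta)}g\big)$. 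Hence $\Theta(\phi,\theta) = 0$ precisely when this cocycle is a normalised coboundary, i.e. when there is a map $\nu \colon X \to A$ with $\nu(0)=0$ satisfying
\begin{align*}
f(x,y) - \theta\big(f(\phi^{-1}(x), \phi^{-1}(y))\big) &= \nu(x \cdot y) - \nu(y),\\
g(x,y) - \theta\big(g(\phi^{-1}(x), \phi^{-1}(y))\big) &= \nu(x+y) - \nu(x) - \nu(y),
\end{align*}
using the explicit form of the action defined in Section \ref{sec-action-auto-cohomo} together with the coboundary conditions \eqref{Coboundary1}--\eqref{Coboundary2}.

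For the inclusion $\im(\Psi) \subseteq \Theta^{-1}\{0\}$, I would take $\psi(x,a) = (\phi(x), \lambda(x)+\theta(a)) \in \Aut_A(E)$ with $\Psi(\psi) = (\phi,\theta)$. By Proposition \ref{condition for automorphism}, the map $\lambda$ satisfies \eqref{homo-eq1}--\eqref{homo-eq2}. Replacing $x,y$ by $\phi^{-1}(x), \phi^{-1}(y)$ and using that $\phi$ is a linear cycle set automorphism, so that $\phi^{-1}(x)+\phi^{-1}(y)=\phi^{-1}(x+y)$ and $\phi^{-1}(x)\cdot\phi^{-1}(y)=\phi^{-1}(x\cdot y)$, transforms \eqref{homo-eq1}--\eqref{homo-eq2} into exactly the two coboundary equations above with $\nu := \lambda \circ \phi^{-1}$. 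Since setting $x=y=0$ in \eqref{homo-eq1} forces $\lambda(0)=0$, the map $\nu$ is normalised, and therefore $\Theta(\phi,\theta)=0$.

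The reverse inclusion runs the same computation backwards. Given $(\phi,\theta)$ with $\Theta(\phi,\theta)=0$ and a normalised $\nu$ as above, I would set $\lambda := \nu\circ\phi$ and substitute to recover \eqref{homo-eq1}--\eqref{homo-eq2}; Proposition \ref{condition for automorphism} then guarantees that $\psi(x,a) = (\phi(x), \lambda(x)+\theta(a))$ is an automorphism of $E$, which lies in $\Aut_A(E)$ and satisfies $\Psi(\psi)=(\phi,\theta)$, so $(\phi,\theta)\in\im(\Psi)$.

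The only point demanding genuine care is the bookkeeping of the normalisation and the direction of the substitutions $\lambda=\nu\circ\phi$ versus $\nu=\lambda\circ\phi^{-1}$, together with the signs introduced by rearranging \eqref{homo-eq1}--\eqref{homo-eq2}. The substantive content — that $\phi^{-1}$ respects both the additive and the cycle set operations — is exactly what converts the automorphism identities into coboundary identities, so I expect no serious obstacle beyond keeping the compositions and signs consistent throughout.
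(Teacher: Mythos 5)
Your proposal is correct and follows essentially the same route as the paper's proof: both identify $\Theta^{-1}\{0\}$ with the stabiliser of $[f,g]$, rewrite the coboundary equations by transporting along $\phi^{\pm 1}$, and invoke Proposition \ref{condition for automorphism} in both directions, with your witness $\nu=\lambda\circ\phi^{-1}$ differing from the paper's choice $-\lambda\circ\phi$ only by the sign convention in which difference of cocycles is declared a coboundary. Your extra observation that $\lambda(0)=0$ is forced (so the witness is automatically normalised) is a harmless refinement the paper leaves implicit.
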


\begin{proof}
First note that 
$$\Theta^{-1}\{0\}=\big\{(\phi, \theta) \in \Aut(X) \times \Autg(A)~|~   ^{(\phi, \theta)}[f, g]= [f, g]\big\},$$
the stabiliser subgroup of $\Aut(X) \times \Autg(A)$ at $[f, g]$. Suppose that $(\phi, \theta) \in \Theta^{-1}\{0\}$. Then, by definition of cohomologous 2-cocycles,  there exists a map $\lambda:X \to A$ such that 
\begin{eqnarray*}
^{(\phi, \theta)}g(x,y)- g(x,y) &=& \lambda (x + y) - \lambda(x) - \lambda(y)~\textrm{and}\\
^{(\phi, \theta)}f(x,y)-f(x,y) &=& \lambda (x \cdot y) - \lambda(y)
\end{eqnarray*}
for all $x, y \in X$. The preceding equations can be written as
\begin{eqnarray}
\label{imp-eq1} \theta \big(g(x,y)\big)- g\big(\phi(x),\phi(y)\big) &=& \lambda \big(\phi(x) + \phi(y)\big) - \lambda\big(\phi(x)\big) - \lambda\big(\phi(y)\big)~\textrm{and}\label{CoboundaryNew2}\\
\label{imp-eq2} \theta \big(f(x,y)\big)-f\big(\phi(x),\phi(y)\big) &=& \lambda \big(\phi(x) \cdot \phi(y)\big) - \lambda\big(\phi(y)\big)\label{CoboundaryNew1}
\end{eqnarray}
for all $x, y \in X$. We define $\psi: E \to E$ by setting
$$\psi(x, a)= \big(\phi(x), ~-\lambda\big(\phi(x)\big)+\theta(a)\big)$$
for $x \in X$ and $a \in A$. Notice that equations \eqref{imp-eq1} and  \eqref{imp-eq2} are precisely equations \eqref{homo-eq1} and \eqref{homo-eq2}, respectively. Hence, it follows from Proposition \ref{condition for automorphism} that  $\psi \in \Aut_A(E)$. Since $\Psi (\psi)=(\phi, \theta)$, we get $\Theta^{-1}\{0\}\subseteq \im(\Psi)$. Conversely, if $(\phi, \theta) \in \im(\Psi)$, then there exists a map  $\lambda:X \to A$ such that $\psi:E \to E$ given by $\psi(x, a):= (\phi(x), ~\lambda(x)+ \theta(a))$ lies in $\Aut_A(E)$. Again, by Proposition \ref{condition for automorphism}, the map $\psi$ being a morphism of linear cycle sets gives equations \eqref{CoboundaryNew2} and \eqref{CoboundaryNew1}. As seen above, this implies that $^{(\phi, \theta)}[f, g]= [f, g]$, which completes the proof.
\end{proof}

\begin{proposition}\label{ker-phi}
$ \Z_\Norm^1(X; A) \cong \Ker(\Psi)$.
\end{proposition}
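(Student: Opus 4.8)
The plan is to exhibit an explicit group isomorphism between $\Ker(\Psi)$ and $\Z_\Norm^1(X; A)$. First I would unwind the definition of the kernel: an element $\psi \in \Aut_A(E)$ lies in $\Ker(\Psi)$ precisely when $\Psi(\psi) = (\id_X, \id_A)$, that is, when the associated pair $(\phi, \theta)$ is trivial. Such a $\psi$ therefore has the simple form $\psi(x, a) = (x, \lambda(x) + a)$ for some map $\lambda \colon X \to A$, and the entire content of membership in $\Ker(\Psi)$ is encoded in $\lambda$.

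Next I would apply Proposition \ref{condition for automorphism} with $\phi = \id_X$ and $\theta = \id_A$. Specialising \eqref{homo-eq1} and \eqref{homo-eq2}, the terms $g(x,y)$ and $f(x,y)$ appear on both sides and cancel, so the two conditions collapse to $\lambda(x + y) = \lambda(x) + \lambda(y)$ and $\lambda(x \cdot y) = \lambda(y)$ for all $x, y \in X$. These are exactly the defining relations of $\Z_\Norm^1(X; A)$ recorded in \eqref{1-cocycles}; moreover additivity forces $\lambda(0) = 0$, so the normalisation is automatic and need not be imposed separately. Thus $\psi \mapsto \lambda$ gives a well-defined map $\Ker(\Psi) \to \Z_\Norm^1(X; A)$, and conversely every $\lambda \in \Z_\Norm^1(X; A)$ produces, via the same proposition, an automorphism $\psi(x,a) = (x, \lambda(x) + a)$ lying in $\Ker(\Psi)$. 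These two assignments are mutually inverse, so the correspondence is a bijection.

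Finally I would check that this bijection is a group homomorphism. Given $\psi_1, \psi_2 \in \Ker(\Psi)$ with associated maps $\lambda_1, \lambda_2$, the composition formula \eqref{lambda1-lambda2} specialised to $\phi_2 = \id_X$ and $\theta_1 = \id_A$ reads $\lambda(x) = \lambda_1(x) + \lambda_2(x)$, so the product $\psi_1 \psi_2$ corresponds to $\lambda_1 + \lambda_2$. Hence $\psi \mapsto \lambda$ carries composition in $\Ker(\Psi)$ to pointwise addition in $\Z_\Norm^1(X; A)$, yielding the desired isomorphism.

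I expect no serious obstacle here, since the argument is essentially a specialisation of the two propositions already established. The only point requiring care is matching the group operations, namely composition of automorphisms on the $\Ker(\Psi)$ side against pointwise addition of cocycles on the $\Z_\Norm^1(X; A)$ side, which is handled cleanly by \eqref{lambda1-lambda2}.
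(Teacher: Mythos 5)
Your proof is correct and takes essentially the same route as the paper's: both unwind $\Ker(\Psi)$ as maps $\psi(x,a)=(x,\lambda(x)+a)$, specialise the automorphism criterion of Proposition \ref{condition for automorphism} at $(\id_X,\id_A)$ so that the $f$ and $g$ terms cancel and the conditions collapse to the $1$-cocycle relations \eqref{1-cocycles}, and then use \eqref{lambda1-lambda2} to identify composition of automorphisms with pointwise addition of cocycles. The only cosmetic difference is that the paper phrases the specialisation through equations \eqref{CoboundaryNew1}--\eqref{CoboundaryNew2} rather than directly through \eqref{homo-eq1}--\eqref{homo-eq2}, which amounts to the same cancellation you perform.
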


\begin{proof}
Observe that $\psi \in \Ker(\Psi)$ if and only if there exists a map $\lambda:X \to A$ such that $\psi(x,a)=(x,~\lambda(x)+a)$ for all $x \in X$ and $a \in A$. Now, $\psi$ is a morphism of linear cycle sets if and only if  conditions \eqref{CoboundaryNew1} and  \eqref{CoboundaryNew2} hold with $\phi=\id_X$ and $\theta=\id_A$. These conditions take the form
 \begin{eqnarray}
\lambda (x \cdot y) &=& \lambda\big(y)~\textrm{and}\label{CoboundaryNewTrivial1}\\
\lambda (x+y) &=&  \lambda\big(x) + \lambda(y)\label{CoboundaryNewTrivial2}
\end{eqnarray}
for $x, y \in X$, and hence $\lambda \in \Z_\Norm^1(X; A)$. Conversely, given $\lambda \in \Z_\Norm^1(X; A)$, we see that $\psi: E \to E$ defined as $\psi(x,a)=(x,~\lambda(x)+a)$ is an element of  $\Ker(\Psi)$. In view of \eqref{lambda1-lambda2}, it follows that the map  $$\iota:  \Z_\Norm^1(X; A) \to \Ker(\Psi)$$ given by $\iota(\lambda)=\psi$ is an isomorphism of groups.
\end{proof}

Combining \eqref{wells-map}, Proposition \ref{im-phi-ker-theta} and Proposition \ref{ker-phi} gives the following exact sequence

\begin{theorem}\label{abelian-main-theorem}
Let $X$ be a linear cycle set, $A$ an abelian group and $E=X \oplus_{f, g} A$ the central extension of $X$ by $A$ corresponding to the 2-cocycle $(f, g)\in \Z_\Norm^2(X; A)$. Then there exists an exact sequence of groups
\begin{equation}\label{abelian-well-sequence}
1 \longrightarrow \Z^1(X;A) \stackrel{\iota}{\longrightarrow} \Aut_A(E) \stackrel{\Psi}{\longrightarrow} \Aut(X) \times \Autg(A) \stackrel{\Theta}{\longrightarrow} \Ho_\Norm^2(X; A),
\end{equation}
where exactness at $ \Aut(X) \times \Autg(A)$ means that $\im(\Psi)=\Theta^{-1}\{0\}$.
\end{theorem}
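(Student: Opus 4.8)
The plan is to observe that the sequence is assembled entirely from maps and groups already constructed, so the work reduces to checking exactness at each of the three interior positions using the preceding propositions; the only genuine subtlety lies in the rightmost map $\Theta$, which is not a group homomorphism. Since $\Psi$ has already been shown to be a group homomorphism (see the remark following \eqref{psi1-psi2}), the first three terms form a genuine sequence of group homomorphisms, and I would simply verify the kernel/image matchings.

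First I would treat the left-hand portion. The map $\iota \colon \Z^1(X;A) \to \Aut_A(E)$ of Proposition \ref{ker-phi} is an isomorphism onto $\Ker(\Psi)$; in particular it is injective, which is exactness at $\Z^1(X;A)$ (matching the leading $1 \to$). The same proposition records $\im(\iota) = \Ker(\Psi)$, which is precisely exactness at $\Aut_A(E)$. Thus both assertions on the left follow at once from Proposition \ref{ker-phi}, with no further computation required.

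Next I would treat exactness at $\Aut(X) \times \Autg(A)$, which by definition means $\im(\Psi) = \Theta^{-1}\{0\}$. This is exactly the content of Proposition \ref{im-phi-ker-theta}, so it suffices to cite it. Here I would flag the point requiring care: the map $\Theta$ is not a homomorphism, so one cannot speak of $\Ker(\Theta)$ in the usual sense. What replaces it is the fibre $\Theta^{-1}\{0\} = \{(\phi,\theta) : {}^{(\phi,\theta)}[f,g] = [f,g]\}$, the stabiliser of $[f,g]$ under the action of $\Aut(X) \times \Autg(A)$ on $\Ho_\Norm^2(X;A)$. Since this stabiliser is a genuine subgroup, the equality $\im(\Psi) = \Theta^{-1}\{0\}$ is meaningful and is the correct formulation of exactness at this node.

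I expect no real obstacle in this final assembly: the substantive work — that $\Aut_A(E)$ is a subgroup, that $\Psi$ is a homomorphism, and the cocycle identifications \eqref{CoboundaryNew1}--\eqref{CoboundaryNew2} linking automorphisms of $E$ to the invariance condition — has been carried out in the earlier statements. The one conceptual difficulty, already dispatched by Proposition \ref{im-phi-ker-theta}, is the translation between the existence of an automorphism $\psi$ of $E$ lifting a pair $(\phi,\theta)$ and the invariance ${}^{(\phi,\theta)}[f,g] = [f,g]$ of the cohomology class; this is where the defining cocycle relations are genuinely used, and it is the heart of the Wells-type phenomenon. Accordingly, the proof is a short citation of Propositions \ref{ker-phi} and \ref{im-phi-ker-theta} together with the homomorphism property of $\Psi$.
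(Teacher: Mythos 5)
Your proposal is correct and matches the paper's own argument exactly: the paper likewise states Theorem \ref{abelian-main-theorem} as an immediate consequence of combining the definition of $\Theta$ in \eqref{wells-map} with Propositions \ref{im-phi-ker-theta} and \ref{ker-phi}. Your additional remarks — that $\Theta$ is only a set map so exactness at $\Aut(X)\times\Autg(A)$ must be read as $\im(\Psi)=\Theta^{-1}\{0\}$, and that this fibre is the stabiliser of $[f,g]$ — are accurate glosses on the same proof rather than a different route.
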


\begin{corollary}
Let $X$ be a linear cycle set and $A$ an abelian group such that $\Ho_\Norm^2(X; A)$ is trivial. Then every automorphism in $\Aut(X) \times \Autg(A)$ extends to an automorphism in $ \Aut_A(E)$.
\end{corollary}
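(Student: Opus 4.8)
The plan is to read the conclusion off directly from the exact sequence of Theorem \ref{abelian-main-theorem}, so that all the substantive content is already packaged in that result. First I would unwind what it means for $(\phi,\theta)\in\Aut(X)\times\Autg(A)$ to \emph{extend} to an automorphism in $\Aut_A(E)$: by the very definition of $\Aut_A(E)$ and of the homomorphism $\Psi$, an extension of $(\phi,\theta)$ is precisely a $\psi\in\Aut_A(E)$ of the form $\psi(x,a)=(\phi(x),\lambda(x)+\theta(a))$ for some map $\lambda\colon X\to A$, that is, an element $\psi$ with $\Psi(\psi)=(\phi,\theta)$. Hence the assertion to be proved is exactly that $\Psi$ is surjective.

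Next I would exploit the hypothesis. Since $\Ho_\Norm^2(X;A)$ is trivial, the connecting map $\Theta\colon\Aut(X)\times\Autg(A)\to\Ho_\Norm^2(X;A)$ can only take the value $0$, so that $\Theta^{-1}\{0\}=\Aut(X)\times\Autg(A)$; equivalently, the class $[f,g]$ is fixed by all of $\Aut(X)\times\Autg(A)$ under the action of Section \ref{sec-action-auto-cohomo}, which forces $\Theta$ to be constant. Exactness at $\Aut(X)\times\Autg(A)$ in Theorem \ref{abelian-main-theorem}, which by definition means $\im(\Psi)=\Theta^{-1}\{0\}$, then yields $\im(\Psi)=\Aut(X)\times\Autg(A)$. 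Thus $\Psi$ is onto, and by the reformulation above every $(\phi,\theta)$ extends, as required.

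Concretely, to exhibit the extension one reruns the construction in the proof of Proposition \ref{im-phi-ker-theta}: since $[f,g]$ is $(\phi,\theta)$-fixed, there is a map $\lambda\colon X\to A$ realising $^{(\phi,\theta)}[f,g]=[f,g]$ at the cocycle level, and then $\psi(x,a)=(\phi(x),-\lambda(\phi(x))+\theta(a))$ is the desired automorphism of $E$. I do not expect any genuine obstacle here; the only point needing a moment's care is the bookkeeping identification that ``extends'' coincides with ``lies in $\im(\Psi)$'', after which the corollary is a purely formal consequence of the exactness already established. All the real work has been absorbed into Theorem \ref{abelian-main-theorem}.
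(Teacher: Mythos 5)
Your proposal is correct and matches the paper's intended argument exactly: the corollary is stated there as an immediate consequence of Theorem \ref{abelian-main-theorem}, read off just as you do --- triviality of $\Ho_\Norm^2(X;A)$ forces $\Theta\equiv 0$, so exactness gives $\im(\Psi)=\Theta^{-1}\{0\}=\Aut(X)\times\Autg(A)$, and surjectivity of $\Psi$ is precisely the extendability claim. Your concluding remark recovering the explicit lift $\psi(x,a)=\bigl(\phi(x),-\lambda(\phi(x))+\theta(a)\bigr)$ from the proof of Proposition \ref{im-phi-ker-theta} is likewise faithful to the paper.
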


Restricting the action of $\Aut(X) \times \Autg(A)$ on $\Ho_\Norm^2(X; A)$ to that of its subgroups $\Aut(X)$ and $\Autg(A)$ gives the following result.

\begin{corollary}
Every automorphism in $\Aut(X)_{[f]}$ and $\Autg(A)_{[g]}$ can be extended to an automorphism in $\Aut_A(E)$.
\end{corollary}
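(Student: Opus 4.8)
The plan is to read this statement directly off the exact sequence of Theorem~\ref{abelian-main-theorem}, so that the substantive work has already been carried out. First I would fix the notation: restricting the action of $\Aut(X) \times \Autg(A)$ on $\Ho_\Norm^2(X; A)$ along the two coordinate inclusions $\phi \mapsto (\phi, \id_A)$ and $\theta \mapsto (\id_X, \theta)$, the symbols $\Aut(X)_{[f]}$ and $\Autg(A)_{[g]}$ denote the corresponding stabiliser subgroups of the class $[f, g]$. Concretely, $\phi \in \Aut(X)_{[f]}$ means $^{(\phi, \id_A)}[f, g] = [f, g]$ and $\theta \in \Autg(A)_{[g]}$ means $^{(\id_X, \theta)}[f, g] = [f, g]$.

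The key observation is that both of these stabilisers sit inside the full stabiliser of $[f, g]$ in $\Aut(X) \times \Autg(A)$, simply because passing to a subgroup can only shrink a stabiliser; and that full stabiliser is exactly $\Theta^{-1}\{0\}$ by the description recorded at the start of the proof of Proposition~\ref{im-phi-ker-theta}. Invoking that proposition, $\Theta^{-1}\{0\} = \im(\Psi)$, so every pair $(\phi, \id_A)$ with $\phi \in \Aut(X)_{[f]}$ and every pair $(\id_X, \theta)$ with $\theta \in \Autg(A)_{[g]}$ lies in $\im(\Psi)$. By the definition of $\Psi$ this yields some $\psi \in \Aut_A(E)$ with $\Psi(\psi) = (\phi, \id_A)$, respectively $\Psi(\psi) = (\id_X, \theta)$; that is, the given automorphism extends to an element of $\Aut_A(E)$, which is precisely the assertion.

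There is essentially no obstacle here: the entire content is the exactness $\im(\Psi) = \Theta^{-1}\{0\}$ from Proposition~\ref{im-phi-ker-theta}, and the corollary is its specialisation to the two coordinate subgroups. If one prefers an explicit lift rather than a bare existence statement, I would instead unwind the construction in the proof of Proposition~\ref{im-phi-ker-theta}: from $^{(\phi, \theta)}[f, g] = [f, g]$ one extracts a map $\lambda \colon X \to A$ witnessing that the relevant cocycles are cohomologous, and then $\psi(x, a) = \big(\phi(x), -\lambda(\phi(x)) + \theta(a)\big)$ is the desired automorphism, with $(\phi, \theta)$ taken to be $(\phi, \id_A)$ or $(\id_X, \theta)$ as appropriate. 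The only point worth a line of care is confirming that restricting to a single coordinate subgroup genuinely lands inside $\Theta^{-1}\{0\}$, which is immediate from the definitions of the restricted actions.
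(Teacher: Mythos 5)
Your proposal is correct and is essentially the paper's own argument: the paper derives this corollary precisely by restricting the action of $\Aut(X) \times \Autg(A)$ on $\Ho_\Norm^2(X; A)$ to the coordinate subgroups, observing that the resulting stabilisers lie in $\Theta^{-1}\{0\} = \im(\Psi)$ by Proposition~\ref{im-phi-ker-theta}. Your optional explicit lift $\psi(x,a) = \big(\phi(x), -\lambda(\phi(x)) + \theta(a)\big)$ is likewise just the construction already present in that proposition's proof, so there is nothing to add.
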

\bigskip

\section{Properties of map $\Theta$}\label{properties of map theta}
Let $(X, \cdot, +)$ be a linear cycle set and $A$ an abelian group. Since the group $\Aut(X) \times \Autg(A)$ acts on the group $\Ho^2_\Norm(X; A)$, we have their semi-direct product $\Ho^2_\Norm(X; A) \rtimes \big(\Aut(X) \times \Autg(A) \big)$. Further, the group $\Ho^2_\Norm(X; A)$ acts on itself by (left) translation.

\begin{proposition}
$\Ho^2_\Norm(X; A) \rtimes \big(\Aut(X) \times \Autg(A) \big)$ acts on $\Ho^2_\Norm(X; A)$ by setting
$$^{[\alpha](\phi, \theta)} [\beta]=~^{[\alpha]}{(^{(\phi, \theta)} [\beta])}$$
for $(\phi, \theta) \in  \Aut(X) \times \Autg(A)$ and $[\alpha], [\beta]\in  \Ho^2_\Norm(X; A)$.
\end{proposition}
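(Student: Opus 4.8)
The plan is to verify the two axioms of a group action directly, relying on the fact established in the preceding proposition that $\Aut(X) \times \Autg(A)$ acts on $\Ho^2_\Norm(X; A)$ by automorphisms, together with the elementary observation that left translation on the abelian group $\Ho^2_\Norm(X; A)$ is nothing but addition. First I would recall that in the semidirect product the multiplication reads
$$\big([\alpha_1](\phi_1,\theta_1)\big)\big([\alpha_2](\phi_2,\theta_2)\big) = \big([\alpha_1] + {}^{(\phi_1,\theta_1)}[\alpha_2]\big)(\phi_1\phi_2,\, \theta_1\theta_2),$$
the first coordinate being twisted by the automorphism action. Writing the proposed action additively, the element $[\alpha](\phi,\theta)$ sends $[\beta]$ to $[\alpha] + {}^{(\phi,\theta)}[\beta]$, so it is the composite of the action of $(\phi,\theta)$ followed by translation by $[\alpha]$, i.e.\ an affine transformation of $\Ho^2_\Norm(X; A)$.

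Next I would dispatch the identity axiom, which is immediate: the identity element $[0](\id_X,\id_A)$ sends $[\beta]$ to $[0] + {}^{(\id_X,\id_A)}[\beta] = [\beta]$. For the compatibility axiom I would compute both sides of
$$^{([\alpha_1](\phi_1,\theta_1))([\alpha_2](\phi_2,\theta_2))}[\beta] = {}^{[\alpha_1](\phi_1,\theta_1)}\big({}^{[\alpha_2](\phi_2,\theta_2)}[\beta]\big).$$
Expanding the left-hand side through the multiplication rule yields $[\alpha_1] + {}^{(\phi_1,\theta_1)}[\alpha_2] + {}^{(\phi_1\phi_2,\theta_1\theta_2)}[\beta]$, whereas expanding the right-hand side yields $[\alpha_1] + {}^{(\phi_1,\theta_1)}\big([\alpha_2] + {}^{(\phi_2,\theta_2)}[\beta]\big)$.

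The two expressions coincide precisely because $(\phi_1,\theta_1)$ acts as an automorphism, hence distributes over the sum $[\alpha_2] + {}^{(\phi_2,\theta_2)}[\beta]$, and because the action of $\Aut(X) \times \Autg(A)$ satisfies ${}^{(\phi_1,\theta_1)}\big({}^{(\phi_2,\theta_2)}[\beta]\big) = {}^{(\phi_1\phi_2,\theta_1\theta_2)}[\beta]$; both facts are exactly the content of the earlier proposition. I do not expect any genuine obstacle here: the assertion is the standard fact that the group $M \rtimes G$, with $G$ acting on $M$ by automorphisms, acts on $M$ by affine maps. The only care required is to check that the twisting appearing in the semidirect product multiplication matches the twisting produced when the automorphism action of $(\phi_1,\theta_1)$ is pulled across the translation by $[\alpha_2]$, which is precisely what the automorphism property guarantees.
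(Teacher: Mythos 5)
Your proposal is correct and follows essentially the same route as the paper: both verify the compatibility axiom directly by expanding the semidirect product multiplication $\big([\alpha_1](\phi_1,\theta_1)\big)\big([\alpha_2](\phi_2,\theta_2)\big)=\big([\alpha_1]+{}^{(\phi_1,\theta_1)}[\alpha_2]\big)(\phi_1\phi_2,\theta_1\theta_2)$ and matching the two sides using that $\Aut(X)\times\Autg(A)$ acts by automorphisms (so it distributes over the sum) and that translation is just addition in $\Ho^2_\Norm(X;A)$. Your additional explicit check of the identity axiom and the affine-map framing are harmless refinements of the same computation.
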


\begin{proof}
For $ (\phi_1, \theta_1), (\phi_2, \theta_2) \in \Aut(X) \times \Autg(A)$ and $[\alpha_1], [\alpha_2], [\beta] \in \Ho^2_\Norm(X; A)$, we compute
\begin{eqnarray*}
^{\big([\alpha_1](\phi_1, \theta_1)\big)\big([\alpha_2](\phi_2, \theta_2)\big)}[\beta] & = & ^{\big([\alpha_1]~^{(\phi_1, \theta_1)}[\alpha_2] \big) \big((\phi_1, \theta_1)(\phi_2, \theta_2)\big)}[\beta]\\
& = & ^{\big([\alpha_1]~^{(\phi_1, \theta_1)}[\alpha_2]\big)}{\big(^{\big((\phi_1, \theta_1)(\phi_2, \theta_2)\big)}[\beta] \big)}\\
& = & ^{[\alpha_1]}{\big(^{^{(\phi_1, \theta_1)}{[\alpha_2]}}{\big(^{(\phi_1, \theta_1)}{\big(^{(\phi_2, \theta_2)}[\beta]\big)} \big)} \big)}\\
& = & ^{[\alpha_1]}{\big({^{(\phi_1, \theta_1)}{[\alpha_2]}} +{^{(\phi_1, \theta_1)}{\big(^{(\phi_2, \theta_2)}[\beta]} \big)} \big)},\\
& &  \textrm{since}~^{(\phi_1, \theta_1)}{[\alpha_2]} \in  \Ho^2_\Norm(X; A),~\textrm{which acts on itself by translation}\\
& = & ^{[\alpha_1]}{\big({^{(\phi_1, \theta_1)}{\big([\alpha_2] +~^{(\phi_2, \theta_2)}[\beta]}} \big)\big)},\\
&  & \textrm{since}~\Aut(X) \times \Autg(A)~\textrm{acts by automorphisms on}~\Ho^2_\Norm(X; A)\\
& = & ^{\big([\alpha_1](\phi_1, \theta_1)\big)}{\big(^{\big([\alpha_2](\phi_2, \theta_2)\big)}[\beta] \big)},\\
& & ~ \textrm{since}~\Ho^2_\Norm(X; A)~\textrm{acts on itself by translation}.
\end{eqnarray*}
Hence, $\Ho^2_\Norm(X; A) \rtimes \big(\Aut(X) \times \Autg(A) \big)$ acts on $\Ho^2_\Norm(X; A)$.
\end{proof}
\medskip

Let $G$ be a group and $A$ an abelian group equipped with an action of $G$. Then 
$$\Zg^1(G; A) =  \big\{f:G  \to A~|~ f(xy)= f(x)+~{^{x}}f(y)~ \textrm{for all}\ x,y\in G\big\}$$
is called the group of {\it 1-cocycles} and
$$\Bg^1(G;A) = \big\{f:G\to A~|~\textrm{there exists}~a \in A~\textrm{such that}~ f(x)={^x}a-a~\textrm{for all}~x\in G \big\}$$
the group of  {\it 1-coboundaries}  \cite[Chapter 4]{Brown1981}. Further, a complement of a subgroup $H$ in a group $G$ is another subgroup $K$ of $G$ such that $G=HK$ and $H\cap K=1$.  The following result relating 1-cocycles and complements is well-known \cite[11.1.2]{Robinson1982}.

\begin{lemma}\label{1-cocycle-complement}
Let $H$ be an abelian group and $G$ a group acting on $H$ by automorphisms. Then the map $f \mapsto \{f(g)g~|~g \in G \}$ gives a bijection from the set $\Zg^1(G; H)$ of 1-cocycles to the set $\{K \mid G=HK~\textrm{and}~H \cap K=1 \}$ of complements of $H$ in $G$. 
\end{lemma}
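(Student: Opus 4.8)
The plan is to realise the abstract data as an internal complement problem inside the semidirect product $\Gamma = H \rtimes G$, in which $H$ sits as a normal abelian subgroup and $G$ as the distinguished complement, the product of $hg$ and $h'g'$ being $(h + {}^{g}h')(gg')$ (writing $H$ additively and using ${}^{g}h$ for the given action, which is conjugation inside $\Gamma$). The phrase ``complement of $H$ in $G$'' is to be read as complement of $H$ in $\Gamma$, so that the symbol $f(g)\,g$ denotes a bona fide element of $\Gamma$. Every element of $\Gamma$ has a unique expression $hg$ with $h \in H$ and $g \in G$, and it is precisely this uniqueness that will drive all four verifications below.

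First I would check that the map is well defined, that is, that $K_f := \{f(g)\,g \mid g \in G\}$ is a complement of $H$ for each $f \in \Zg^1(G; H)$. The heart of the matter is closure under multiplication: the product $\big(f(g_1)g_1\big)\big(f(g_2)g_2\big)$ equals $\big(f(g_1) + {}^{g_1}f(g_2)\big)(g_1 g_2)$, and invoking the cocycle identity $f(g_1 g_2) = f(g_1) + {}^{g_1}f(g_2)$ rewrites this as $f(g_1 g_2)(g_1 g_2) \in K_f$. The cocycle condition forces $f(1) = 0$ (take $g_1 = g_2 = 1$) and hence $f(g^{-1}) = -\,{}^{g^{-1}}f(g)$ (take $g_2 = g_1^{-1}$), which supply the identity and the inverses. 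That $K_f$ is an actual complement then follows from the unique decomposition: from $hg = (h - f(g))\big(f(g)g\big)$ one gets $\Gamma = H K_f$, while $f(g)g \in H$ forces $g = 1$ and $f(1) = 0$, so $H \cap K_f = 1$.

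Injectivity is immediate from the same uniqueness: if $K_f = K_{f'}$, then for each $g$ the element $f(g)g$ lies in $K_{f'}$, and comparing $G$-components forces the witness in $K_{f'}$ to be $g$ itself, whence $f(g) = f'(g)$. For surjectivity I would begin with an arbitrary complement $K$ and exploit that the canonical projection $\Gamma \to \Gamma/H \cong G$ restricts to an isomorphism $K \xrightarrow{\sim} G$; writing the unique preimage of $g$ in $K$ as $f(g)\,g$ defines a function $f \colon G \to H$ with $K = K_f$, and the hypothesis that $K$ is closed under multiplication translates, through the very product computation used above, into the cocycle identity for $f$.

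The only genuinely delicate point is the setup rather than the computation: one must make the ambient semidirect product $\Gamma = H \rtimes G$ explicit, after which the cocycle identity for $f$ and the closure condition for $K_f$ become literally the same equation. Once that correspondence is pinned down, both injectivity and surjectivity reduce to bookkeeping with the unique $H$–$G$ factorisation in $\Gamma$, so no further obstacle arises; this is the classical dictionary between $1$-cocycles and complements recorded in \cite[11.1.2]{Robinson1982}.
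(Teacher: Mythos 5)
Your proof is correct and is essentially the paper's own route: the paper states this lemma without proof, citing \cite[11.1.2]{Robinson1982}, and your argument is precisely that standard dictionary between $1$-cocycles and complements, carried out via the unique $H$--$G$ factorisation in the semidirect product. Your opening clarification is in fact necessary, not merely cosmetic: read literally, with $f$ defined on a group $G$ containing $H$, the map fails (e.g.\ $f=0$ would give $K_0=G$), so ``complements of $H$ in $G$'' must be read, as you do, as complements of $H$ in $\Gamma = H \rtimes G$ --- exactly the form in which the lemma is invoked in the proof of Theorem \ref{main-thm-3}, where $H=\Ho_\Norm^2(X; A)$ and the complement $\mathbb{S}_{[\alpha]}$ sits inside $\Ho_\Norm^2(X; A) \rtimes \big(\Aut(X)\times\Autg(A)\big)$.
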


\begin{theorem}\label{main-thm-3}
Let $(X, \cdot, +)$ be a linear cycle set, $A$ an abelian group and $\Theta_{[\alpha]}: \Aut(X) \times \Autg(A) \longrightarrow \Ho^2_\Norm(X; A)$ the map corresponding to a cohomology class $[\alpha]\in \Ho^2_\Norm(X; A)$. Then the following hold:
\begin{enumerate}
\item $\Theta_{[\alpha]}$ is a group theoretical 1-cocycle. 
\item Any two such maps corresponding to distinct linear cycle set cohomology classes are cohomologous as group theoretical 1-cocycles.
\end{enumerate}
\end{theorem}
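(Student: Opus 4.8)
The plan is to carry out both parts as short formal computations inside the additive abelian group $\Ho^2_\Norm(X; A)$, on which $G := \Aut(X) \times \Autg(A)$ acts by automorphisms (Section \ref{sec-action-auto-cohomo}). The key first move is to rewrite the implicit definition of $\Theta_{[\alpha]}$ in explicit form: since $\Theta_{[\alpha]}(\phi, \theta) +~ {}^{(\phi,\theta)}[\alpha] = [\alpha]$ and the translation action of $\Ho^2_\Norm(X;A)$ on itself is free and transitive, we obtain
$$\Theta_{[\alpha]}(\phi, \theta) = [\alpha] -~ {}^{(\phi,\theta)}[\alpha].$$
Writing the $G$-action as $g \cdot [\beta] = {}^{g}[\beta]$ for $g = (\phi,\theta)$, both assertions then follow from the distributivity of $g \cdot (-)$ over sums and differences in $\Ho^2_\Norm(X;A)$.

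For part (1), I would verify the group-cocycle identity $\Theta_{[\alpha]}(g_1 g_2) = \Theta_{[\alpha]}(g_1) +~ {}^{g_1}\Theta_{[\alpha]}(g_2)$ by direct substitution. The left-hand side is $[\alpha] -~ {}^{g_1 g_2}[\alpha]$, while the right-hand side expands, using that $g_1$ acts as an automorphism, to
$$\big([\alpha] - g_1\cdot[\alpha]\big) + g_1\cdot\big([\alpha] - g_2\cdot[\alpha]\big) = [\alpha] - g_1\cdot[\alpha] + g_1\cdot[\alpha] - g_1 g_2\cdot[\alpha] = [\alpha] -~ {}^{g_1 g_2}[\alpha].$$
The term $g_1 \cdot [\alpha]$ cancels, leaving exactly the left-hand side; the only point worth flagging is that this cancellation relies on $g_1$ acting by a homomorphism, which is precisely the content established in Section \ref{sec-action-auto-cohomo}.

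For part (2), given two classes $[\alpha], [\alpha'] \in \Ho^2_\Norm(X;A)$, I would compute the difference $\Theta_{[\alpha]} - \Theta_{[\alpha']}$ and recognise it as a principal $1$-coboundary. Setting $c = [\alpha'] - [\alpha]$ and again using that $g$ acts as an automorphism,
$$\Theta_{[\alpha]}(g) - \Theta_{[\alpha']}(g) = \big([\alpha] - g\cdot[\alpha]\big) - \big([\alpha'] - g\cdot[\alpha']\big) = {}^{g}c - c,$$
which is exactly the value at $g$ of the $1$-coboundary in $\Bg^1\big(G; \Ho^2_\Norm(X;A)\big)$ determined by $c$. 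Hence $\Theta_{[\alpha]}$ and $\Theta_{[\alpha']}$ differ by a coboundary and are cohomologous as group-theoretical $1$-cocycles.

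I expect no genuine obstacle here: both assertions are formal consequences of the explicit formula for $\Theta_{[\alpha]}$ together with the automorphism property of the $G$-action, and the computation is the verbatim linear-cycle-set analogue of the classical argument identifying the Wells map of a group extension as a $1$-cocycle. The only care required is bookkeeping between the multiplicative notation for the $G$-action and the additive group law of $\Ho^2_\Norm(X;A)$, and consistently applying $g\cdot(-)$ as a homomorphism when distributing over differences; in particular one must track the sign so that the coboundary witness is $c = [\alpha'] - [\alpha]$ rather than its negative.
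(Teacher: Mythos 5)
Your proof is correct, but it takes a genuinely different route from the paper's, at least for part (1). You begin by solving the defining equation explicitly, $\Theta_{[\alpha]}(g)=[\alpha]-{}^{g}[\alpha]$, and then verify the cocycle identity $\Theta_{[\alpha]}(g_1g_2)=\Theta_{[\alpha]}(g_1)+{}^{g_1}\Theta_{[\alpha]}(g_2)$ and the coboundary identity $\Theta_{[\alpha]}(g)-\Theta_{[\alpha']}(g)={}^{g}c-c$ with $c=[\alpha']-[\alpha]$ by direct substitution, using only that $G=\Aut(X)\times\Autg(A)$ acts by automorphisms on the abelian group $\Ho^2_\Norm(X;A)$; your sign bookkeeping is right, and both identities check out against the paper's definitions of $\Zg^1$ and $\Bg^1$. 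The paper instead proves part (1) structurally: it forms the semidirect product $\Ho^2_\Norm(X;A)\rtimes\big(\Aut(X)\times\Autg(A)\big)$, shows via free transitivity of the translation action that the stabiliser $\mathbb{S}_{[\alpha]}$ is a complement of $\Ho^2_\Norm(X;A)$, and then invokes the classical bijection between complements and $1$-cocycles (Lemma \ref{1-cocycle-complement}) to identify $\Theta_{[\alpha]}$ with the cocycle attached to $\mathbb{S}_{[\alpha]}$; for part (2) it runs essentially your computation, phrased in the translation-action notation (its $[\beta]$ is your $-c$). Your argument is shorter and more self-contained, needing no complement machinery and making the cancellations transparent; the paper's approach buys a conceptual explanation --- $\Theta_{[\alpha]}$ is a cocycle \emph{because} the stabiliser of $[\alpha]$ is a complement in the semidirect product --- which connects the Wells map to the standard theory of complements and conjugacy classes thereof. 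One small point worth making explicit in your write-up: the uniqueness giving the formula $\Theta_{[\alpha]}(g)=[\alpha]-{}^{g}[\alpha]$ is just freeness of the translation action, which you use tacitly; stating it keeps the reduction from the implicit definition \eqref{wells-map} to your explicit formula airtight.
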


\begin{proof}
Suppose that $\Theta=\Theta_{[\alpha]}$ for $[\alpha] \in \Ho^2_\Norm(X; A)$ and $\bold{g} \in \Ho^2_\Norm(X; A) \rtimes \big( \Aut(X) \times \Autg(A)\big)$. Then for elements $[\alpha], ~^{\bold{g}}[\alpha] \in \Ho^2_\Norm(X; A)$, there exists a unique $[\beta] \in \Ho^2_\Norm(X; A)$ such that $~^{[\beta]}[\alpha]=~^{\bold{g}}[\alpha]$. Viewing $[\beta]$ as an element of $\Ho^2_\Norm(X; A) \rtimes \big( \Aut(X) \times \Autg(A)\big)$, it follows that $[\beta]^{-1}\bold{g} \in \mathbb{S}_{[\alpha]}$, the stabiliser subgroup of $\Ho^2_\Norm(X; A) \rtimes \big( \Aut(X) \times \Autg(A) \big)$ at $[\alpha]$, and hence
$$ \Ho^2_\Norm(X; A) \rtimes \big( \Aut(X) \times \Autg(A) \big)= \Ho^2_\Norm(X; A)\mathbb{S}_{[\alpha]}.$$ Further, since $\Ho^2_\Norm(X; A)$ acts freely on itself, it follows that $\mathbb{S}_{[\alpha]}$ is a complement of $\Ho^2_\Norm(X; A)$ in $\Ho^2_\Norm(X; A) \rtimes \big(\Aut(X) \times \Autg(A) \big)$. By Lemma \ref{1-cocycle-complement}, let $f:\Aut(X) \times \Autg(A) \to \Ho^2_\Norm(X; A)$ be the unique 1-cocycle corresponding to the complement $\mathbb{S}_{[\alpha]}$ of $\Ho^2_\Norm(X; A)$ in $\Ho^2_\Norm(X; A) \rtimes \big(\Aut(X) \times \Autg(A)\big)$. Then 
$$\mathbb{S}_{[\alpha]}= \big\{f(\phi, \theta)(\phi, \theta)~|~(\phi, \theta) \in \Aut(X) \times \Autg(A) \big\},$$
that is, $$[\alpha]=~^{f(\phi, \theta)(\phi, \theta)}[\alpha]=~^{f(\phi, \theta)}{\big(^{(\phi, \theta)}[\alpha]\big)}.$$
Now, by definition of $\Theta$ as in \eqref{wells-map}, we obtain $f(\phi, \theta)=\Theta (\phi, \theta)$, and hence $\Theta$ is a 1-cocycle.
\par

For the second assertion, let $\Theta=\Theta_{[\alpha]}$ and $\Theta'=\Theta'_{[\alpha']}$ for $[\alpha], [\alpha'] \in \Ho^2_\Norm(X; A)$. Then for any $(\phi, \theta) \in \Aut(X) \times \Autg(A)$, we have 
$$ ^{\Theta(\phi, \theta)} \big(^{(\phi, \theta)}[\alpha] \big)= [\alpha]~\textrm{and}~ ^{\Theta'(\phi, \theta)} \big(^{(\phi, \theta)}[\alpha'] \big)= [\alpha'].$$
Since $\Ho^2_\Norm(X; A)$ acts transitively on itself by (left) translation, there exists a unique $[\beta] \in \Ho^2_\Norm(X; A)$ such that $ ^{[\beta]}[\alpha']=[\alpha]$. This gives 
$$ ^{\Theta(\phi, \theta)} \big(^{^{(\phi, \theta)}{[\beta]}}{\big(^{(\phi, \theta)}[\alpha'] \big)} \big)= ~^{[\beta]}{\big(^{\Theta'(\phi, \theta)} \big(^{(\phi, \theta)}[\alpha'] \big)\big)}.$$
Since $[\beta], ~\Theta(\phi, \theta), {^{(\phi, \theta)}{[\beta]}}$ and $\Theta'(\phi, \theta)$ all lie in $\Ho^2_\Norm(X; A)$, which acts freely on itself, we must have $$\Theta(\phi, \theta) + {^{(\phi, \theta)}{[\beta]}}= \Theta'(\phi, \theta) + [\beta].$$
Thus, $\Theta$ and $\Theta'$ differ by a 1-coboundary, which completes the proof.
\end{proof}
\medskip

\section{Comparison with Wells exact sequence for groups}\label{comparision-sequences}
In \cite{Wells1971}, Wells derived an exact sequence relating 1-cocycles, automorphisms and second cohomology of groups corresponding to a given extension of groups. The sequence has found applications in some long standing problems on automorphisms of finite groups. We refer the reader to \cite[Chapter 2]{PSY2018} for a detailed account of the same, and recall the construction of this exact sequence for central extension of groups. Consider a central extension
\begin{equation}\label{extension-groups}
\mathcal{E}':  0 \to N \to G \to H \to 0
\end{equation}
of (additively written) groups. In this case $N$ is a trivial $H$-module, and hence the group  $\Zg^1(H;N)$ of 1-cocycles is simply the group of all homomorphisms from $H$ to $N$. Let $\Hog^2(H;N)$ be the second group cohomology of $H$ with coefficients in $N$, and $g:H \times H \to N$ be a group theoretical normalised 2-cocycle corresponding to the extension \eqref{extension-groups}. It follows from classical extension theory of groups that $$G \cong H \times_g N,$$ where $H \times_g N$ has underlying set $H \times N$ and group operation
$$(x, a)+(y, b)=(x+y, a +b+ g(x, y))$$
for $x, y \in H$ and $a, b \in N$. 
\par

Let $\Autg_N(G)$ be the group of automorphisms of $G$ keeping $N$ invariant as a set. In view of the identification $G \cong H \times_g N$, we have
\begin{Small}
$$\Autg_N(G) = \big\{\psi \in \Aut(G)~|~ \psi(x, a)=(\phi(x), \lambda(x) +\theta(a))~\textrm{for some}~(\phi, \theta) \in \Aut(H) \times \Aut(N)~\textrm{and map}~\lambda: H \to N \big\}.$$
\end{Small}
There is a monomorphism of groups $$\j: \Zg^1(H, N) \to  \Autg_N(G)$$ given by $\j(\lambda)=\psi$, where $\psi(x, a)=(x, \lambda(x)+a)$ \cite[Proposition 2.45]{PSY2018}. Also, there is a natural homomorphism $$\Phi: \Autg_N(G) \to  \Autg(H) \times \Autg(N)$$ given by $$\Phi(\psi)=(\phi, \theta).$$  As in Section \ref{sec-action-auto-cohomo}, there is an action of $\Autg(H) \times \Autg(N)$ on $\Hog^2(H;N)$. In fact, given any $(\phi, \theta) \in \Autg(H) \times \Autg(N)$ and $[h] \in \Hog^2(H;N)$, setting $$^{(\phi, \theta)} [h]=[^{(\phi, \theta)} h],$$ where  $^{(\phi, \theta)} h(x, y)= \theta \big( h \big(\phi^{-1}(x), \phi^{-1}(y) \big)\big)$ for $x, y \in X$, defines this action. Further, the action restricts to an action on the subgroup $\Hog^2_{sym}(H;N)$  of $\Hog^2(H;N)$ consisting of symmetric cohomology classes. Notice that the group $\Hog^2(H; N)$ acts freely and transitively on itself by (left) translation. Now, for each $(\phi, \theta) \in \Autg(H) \times \Autg(N)$, we have  cohomology classes $^{(\phi, \theta)}[g], [g] \in \Hog^2(H; N)$. Thus, there exists a unique element $\Omega (\phi, \theta)  \in \Hog^2(H; N)$ such that 
$$\Omega (\phi, \theta) + ~^{(\phi, \theta)}[g]= [g].$$
This gives a map
\begin{equation}\label{group-wells-map}
\Omega:  \Autg(H) \times \Autg(N) \to \Hog^2(H; N),
\end{equation}
which depends on the equivalence class of the extension $\mathcal{E}'$ or equivalently on its corresponding cohomology class. Further, $\Omega$ is a 1-cocycle with respect to the action of $ \Autg(H) \times \Autg(N)$ on  $\Hog^2(H; N)$ \cite[Corollary 2.41]{PSY2018}.  With the preceding set-up, Wells derived the following exact sequence of groups
\begin{equation}\label{central-group-wells}
1 \longrightarrow \Zg^1(H, N) \stackrel{\j}{\longrightarrow} \Autg_N(G) \stackrel{\Phi}{\longrightarrow} \Autg(H) \times \Autg(N) \stackrel{\Omega}{\longrightarrow} \Hog^2(H;N).
\end{equation}
\par

Let $\mathcal{L}$ and $\mathcal{A}$ denote the categories of linear cycle sets and abelian groups, respectively. Then there is a forgetful functor $$\mathcal{F}: \mathcal{L} \to \mathcal{A}$$ that maps a linear cycle set to its underlying abelian group. The preceding discussion shows that the functor $\mathcal{F}$ induces a map from the exact sequence \eqref{abelian-well-sequence} to the exact sequence \eqref{central-group-wells}.

\begin{theorem}\label{comparison-wells}
Let $(X, \cdot, +)$ be a linear cycle set, $A$ an abelian group viewed as a trivial $(X, +)$-module and $E=X \oplus_{f, g} A$ the central extension corresponding to the 2-cocycle  $(f, g)\in \Z_\Norm^2(X; A)$. Then the following diagram of groups commutes
$$
\xymatrix{
1 \ar[r]  &\Z^1(X;A) \ar@{^{(}->}[d]^{inclusion} \ar[r]^{\iota} &\Aut_A(E) \ar[r]^{\Psi \hspace{1cm}} \ar@{^{(}->}[d]^{inclusion} &  \Aut(X) \times \Autg(A) \ar[r]^{\hspace{5mm} \Theta} \ar@{^{(}->}[d]^{inclusion} & \Ho_\Norm^2(X; A) \ar[d]^{\Lambda}\\
1 \ar[r] & \Zg^1((X, +); A) \ar[r]^{\j} & \Autg_A((E, +)) \ar[r]^{\Phi \hspace{1cm}} & \Autg((X, +)) \times \Autg(A) \ar[r]^{\hspace{5mm}  \Omega } & \Hog_{sym}^2((X, +); A),}
$$
where $\Lambda$ is as in Proposition \ref{homo-cycle-group-cohomology}.
\end{theorem}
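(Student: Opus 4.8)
The plan is to check, after confirming the four vertical maps are well defined, that each of the three squares commutes. For well-definedness I would record that a normalised linear cycle set $1$-cocycle is in particular an additive homomorphism, so $\Z^1(X;A)\subseteq\Zg^1((X,+);A)$; that any $\psi\in\Aut_A(E)$ preserves $+$ and has the form $\psi(x,a)=(\phi(x),\lambda(x)+\theta(a))$ with $\phi\in\Aut(X)\le\Autg((X,+))$, so $\psi\in\Autg_A((E,+))$; that the third inclusion is once more the containment $\Aut(X)\le\Autg((X,+))$; and that $\Lambda$ is the homomorphism of Proposition \ref{homo-cycle-group-cohomology}. I would also note that the symmetric group $2$-cocycle attached to the underlying extension $(E,+)=(X\oplus_{f,g}A,+)$ is precisely $g$, so that $\Omega$ is built from $[g]$ exactly as $\Theta$ is built from $[f,g]$.

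The two left squares are formal. The maps $\iota$ and $\j$ are given by the identical rule $\lambda\mapsto\big((x,a)\mapsto(x,\lambda(x)+a)\big)$, so the leftmost square commutes on the nose. Likewise $\Psi$ and $\Phi$ both send $\psi(x,a)=(\phi(x),\lambda(x)+\theta(a))$ to the pair $(\phi,\theta)$, while the two vertical inclusions alter none of this data; hence the middle square commutes tautologically.

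The content lies in the right-hand square, where I must establish $\Lambda\circ\Theta=\Omega$ on $\Aut(X)\times\Autg(A)$. The crucial observation is that $\Lambda$ is equivariant for the relevant actions: the action formula ${}^{(\phi,\theta)}g(x,y)=\theta\big(g(\phi^{-1}(x),\phi^{-1}(y))\big)$ is literally the same whether $g$ is read as the second component of a linear cycle set cocycle or as a symmetric group cocycle, and $\Lambda$ merely projects $[f,g]$ to $[g]$; therefore $\Lambda\big({}^{(\phi,\theta)}[f,g]\big)={}^{(\phi,\theta)}[g]$, where the right-hand action is the one used to define $\Omega$, restricted along $\Aut(X)\le\Autg((X,+))$. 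Applying the homomorphism $\Lambda$ to the defining relation $\Theta(\phi,\theta)+{}^{(\phi,\theta)}[f,g]=[f,g]$ then yields
$$\Lambda\big(\Theta(\phi,\theta)\big)+{}^{(\phi,\theta)}[g]=[g].$$
This is exactly the relation characterising $\Omega(\phi,\theta)$, and since $\Hog^2_{sym}((X,+);A)$ acts freely on itself by translation the element solving it is unique, forcing $\Lambda\big(\Theta(\phi,\theta)\big)=\Omega(\phi,\theta)$.

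The only non-routine point, and hence the main obstacle, is the equivariance of $\Lambda$ together with the bookkeeping that matches the group cocycle $g$ of $(E,+)$ with the second component of the linear cycle set cocycle $(f,g)$ of $E$. Once these identifications are in place, freeness of the translation action delivers the commutativity of the last square immediately, while the first two squares require no computation.
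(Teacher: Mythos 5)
Your proposal is correct and matches the paper's (largely implicit) argument: the paper derives the theorem from the preceding discussion of the forgetful functor, and your verification supplies exactly the intended details --- the identical formulas for $\iota,\j$ and $\Psi,\Phi$ make the left two squares tautological, while the right square follows from $\Lambda[f,g]=[g]$, the equivariance $\Lambda\bigl({}^{(\phi,\theta)}[f,g]\bigr)={}^{(\phi,\theta)}[g]$, and uniqueness of the solution of the defining relation for $\Omega$. Your bookkeeping observation that $g$ is precisely the symmetric group $2$-cocycle of the underlying extension $(E,+)$ is the same identification the paper relies on.
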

\bigskip

\section{Extensions of bi-groupoids and dynamical cocycles}\label{dynamical-2-cocycle}
During the last decade many new examples of bi-groupoids, namely,  bi-racks, bi-quandles, braces, skew braces, linear cycle sets, etc, have been introduced in connection to virtual knot theory and solutions of the Yang--Baxter equation. Let $X$ and $S$ be two non-empty sets, $\alpha, \alpha' : X \times X \to \Maps(S \times S, S)$ and $\beta, \beta' : S \times S \to \Maps(X \times X, X)$ maps. Then $X \times S$ with the binary operations 
\begin{equation}
(x, s) \cdot (y, t)= \big( \beta_{s, t}(x, y), ~\alpha_{x, y}(s, t) \big)
\end{equation}
and
\begin{equation}
(x, s) * (y, t)= \big( \beta'_{s, t}(x, y), ~\alpha'_{x, y}(s, t) \big)
\end{equation}
forms a bi-groupoid. Naturally, to obtain a bi-groupoid of special type it is essential to have functions $\alpha, \alpha', \beta, \beta'$ with nice properties. Using defining axioms of a cycle set, we can deduce a generalisation of \cite[Lemma 2.1]{Vendramin2016}, which itself is a linear cycle set analogue of a similar result for quandles \cite[Lemma 2.1]{AndruskiewitschGrana2003}.

\begin{proposition}
Let $X$  and $S$ be two sets, $\alpha : X \times X \to \Maps(S \times S, S)$ and $\beta : S \times S \to \Maps(X \times X, X)$ two maps. Then the set $X \times S$ with the binary operation 
\begin{equation}
(x, s) \cdot (y, t)= \big( \beta_{s, t}(x, y), ~\alpha_{x, y}(s, t) \big)
\end{equation}
forms a  cycle set if and only if the following conditions hold:
\begin{enumerate}
\item the map $(y, t) \mapsto  \big( \beta_{s, t}(x, y),~ \alpha_{x, y}(s, t) \big)$ is a bijection  for all $(x, s) \in X \times S$,
\item $\beta_{\alpha_{x, y}(s, t), \alpha_{x, z}(s, q)}\Big(\beta_{s, t}(x, y),~ \beta_{s, q}(x, z) \Big)= \beta_{\alpha_{y, x}(t, s), \alpha_{y, z}(t, q)}\Big(\beta_{t, s}(y, x),~ \beta_{t, q}(y, z) \Big)$ 
\item[] and
\item[] $\alpha_{\beta_{s, t}(x, y), \beta_{s, q}(x, z)}\Big(\alpha_{x, y}(s, t), ~\alpha_{x, z}(s, q) \Big)= \alpha_{\beta_{t, s}(y, x), \beta_{t, q}(y, z)}\Big(\alpha_{y, x}(t, s), ~\alpha_{y, z}(t, q) \Big)$ for all $x, y, z \in X$ and $s, t, q \in S$.
\end{enumerate}
\end{proposition}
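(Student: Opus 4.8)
The plan is to prove this by directly translating the two cycle set axioms---the bijectivity of left translations and the cyclic identity \eqref{E:Cyclic}---into statements about the component maps $\alpha$ and $\beta$, since the operation on $X\times S$ is defined componentwise. First I would observe that condition (1) is nothing but a restatement of the requirement that every left translation $(x,s)\cdot(-)$ be a bijection of $X\times S$: the left translation sends $(y,t)$ to $\big(\beta_{s,t}(x,y),\,\alpha_{x,y}(s,t)\big)$, so asking this to be bijective for each fixed $(x,s)$ is exactly condition (1). This disposes of the bijectivity axiom with no computation.

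The substance is the cyclic identity. I would fix $x,y,z\in X$ and $s,t,q\in S$ and expand both sides of
$$
(X\cdot Y)\cdot(X\cdot Z)=(Y\cdot X)\cdot(Y\cdot Z)
$$
where $X=(x,s)$, $Y=(y,t)$, $Z=(z,q)$. The plan is to compute the inner products first: $X\cdot Y=\big(\beta_{s,t}(x,y),\,\alpha_{x,y}(s,t)\big)$ and $X\cdot Z=\big(\beta_{s,q}(x,z),\,\alpha_{x,z}(s,q)\big)$, and symmetrically $Y\cdot X$ and $Y\cdot Z$ by swapping the roles of the two points. Then I would feed these two pairs back into the definition of $\cdot$ to compute the outer product on each side. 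The first coordinate of the left side becomes $\beta_{\alpha_{x,y}(s,t),\,\alpha_{x,z}(s,q)}\big(\beta_{s,t}(x,y),\,\beta_{s,q}(x,z)\big)$ and the second coordinate becomes $\alpha_{\beta_{s,t}(x,y),\,\beta_{s,q}(x,z)}\big(\alpha_{x,y}(s,t),\,\alpha_{x,z}(s,q)\big)$, with the right side obtained by the appropriate symmetric substitution. Equating the two sides coordinatewise yields exactly the two displayed equations of condition (2), giving the equivalence in both directions at once since every step is reversible.

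The main obstacle is purely bookkeeping: keeping the subscripts and arguments of $\alpha$ and $\beta$ correctly aligned when the outer product is formed, because the subscripts of the outer $\beta$ are themselves values of $\alpha$ (and vice versa), so a single transposition error in which index plays the role of the ``left factor'' propagates through the whole identity. I expect no conceptual difficulty---there is no need to invoke the linear axioms \eqref{E:LinCyclic} or \eqref{E:LinCyclic2} here, only the bare cycle set structure---so the proof is a careful but routine componentwise expansion. I would therefore present the computation compactly, displaying the two coordinates of each side and remarking that the first coordinates match iff the first equation of (2) holds and the second coordinates match iff the second equation of (2) holds.
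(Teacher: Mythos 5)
Your proposal is correct and is exactly the argument the paper intends: the proposition is stated there without a written proof (the authors simply note it follows from the defining axioms of a cycle set, generalising Lemma 2.1 of Vendramin's paper), and the intended verification is precisely your componentwise translation --- condition (1) restating bijectivity of left translations, and the two equations of condition (2) arising from equating the first and second coordinates of $\big((x,s)\cdot(y,t)\big)\cdot\big((x,s)\cdot(z,q)\big)$ and $\big((y,t)\cdot(x,s)\big)\cdot\big((y,t)\cdot(z,q)\big)$, with all steps reversible. Your subscript bookkeeping matches the statement exactly, so nothing is missing.
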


If one of the sets is a cycle set, then we obtain

\begin{corollary}
Let $(X, \cdot)$ be a cycle set, $S$ a set and $\alpha : X \times X \to \Maps(S \times S, S)$  a  map. Then the set $X \times S$ with the binary operation 
\begin{equation}
(x, s) \cdot (y, t)= \big( x \cdot y, ~\alpha_{x, y}(s, t) \big)
\end{equation}
forms a cycle set if and only if the following conditions hold:
\begin{enumerate}
\item the map $(y, t) \mapsto  \big( x \cdot y,~ \alpha_{x, y}(s, t) \big)$ is a bijection  for each $(x, s) \in X \times S$,
\item $\alpha_{x \cdot y, x \cdot z}\Big(\alpha_{x, y}(s, t), ~\alpha_{x, z}(s, q) \Big)= \alpha_{y \cdot x, y \cdot z}\Big(\alpha_{y, x}(t, s), ~\alpha_{y, z}(t, q) \Big)$ for all $x, y, z \in X$ and $s, t, q \in S$.
\end{enumerate}
\end{corollary}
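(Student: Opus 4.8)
The plan is to derive this corollary as a direct specialisation of the preceding proposition. First I would set $\beta_{s, t}(x, y) = x \cdot y$ for all $s, t \in S$ and $x, y \in X$, so that $\beta$ is independent of the $S$-coordinates and is simply the given cycle set operation of $X$. Under this choice the binary operation in the proposition collapses to
$$(x, s) \cdot (y, t) = \big( x \cdot y, ~\alpha_{x, y}(s, t) \big),$$
which is exactly the operation appearing in the corollary. Thus the entire statement becomes an instance of the proposition, and it suffices to rewrite the proposition's hypotheses under this substitution.

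Next I would substitute $\beta_{s,t}(x,y) = x \cdot y$ into the two equations comprising condition (2) of the proposition and simplify. Because $\beta$ does not depend on its subscripts, the first equation reduces to $(x \cdot y) \cdot (x \cdot z) = (y \cdot x) \cdot (y \cdot z)$, which is precisely the cycle set relation \eqref{E:Cyclic} for $(X, \cdot)$. Since $(X, \cdot)$ is assumed to be a cycle set, this equation holds automatically and imposes no further constraint, so it can be discarded. The second equation becomes
$$\alpha_{x \cdot y, x \cdot z}\big(\alpha_{x, y}(s, t),~ \alpha_{x, z}(s, q) \big) = \alpha_{y \cdot x, y \cdot z}\big(\alpha_{y, x}(t, s),~ \alpha_{y, z}(t, q) \big),$$
which is exactly condition (2) of the corollary. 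Meanwhile condition (1) of the proposition transcribes verbatim into condition (1) of the corollary once the substitution is made, the bijectivity of $y \mapsto x \cdot y$ being already built into the cycle set structure of $X$.

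Putting these together, the proposition asserts that $X \times S$ with this operation is a cycle set if and only if its condition (1) together with both equations of its condition (2) hold; the substitution shows that one of those equations is vacuous (being \eqref{E:Cyclic}) while the remainder match conditions (1) and (2) of the corollary, yielding the equivalence in both directions. The only point requiring genuine care is the bookkeeping of subscripts when collapsing $\beta$: one must verify that the first equation of the proposition's condition (2) reduces precisely to \eqref{E:Cyclic} rather than to some twisted variant, and that the inner and outer arguments of $\alpha$ in the second equation land on the indices $x \cdot y,\, x \cdot z$ on the left and $y \cdot x,\, y \cdot z$ on the right exactly as stated. This verification is routine, but it is the one place where a transcription error could slip in, so I would carry out the substitution symbol by symbol.
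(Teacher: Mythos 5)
Your proposal is correct and matches the paper's intent exactly: the paper offers no separate proof, presenting the corollary as the immediate specialisation of the preceding proposition obtained by taking $\beta_{s,t}(x,y)=x\cdot y$, whereupon the first equation of condition (2) collapses to the cycle set relation \eqref{E:Cyclic} (automatic since $X$ is a cycle set) and the remaining conditions transcribe verbatim. Your explicit bookkeeping of the substitution, including noting that the $\beta$-equation becomes vacuous while condition (1) must be retained as a genuine hypothesis on the product map, is precisely the routine verification the paper leaves implicit.
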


A map $\alpha$ satisfying condition (2) of the preceding corollary is referred as a {\it dynamical cocycle} of $X$ with values in $S$ and the cycle set structure on $X \times S$  is called a {\it dynamical extension} of $X$ by $\alpha$.  One can prove a similar result for linear cycle sets. Before stating the result, note that condition \ref{E:Cyclic} in the definition of a linear cycle set is redundant \cite[Section 3]{MR3881192}.

\begin{proposition}
Let $X$ and $S$ be two sets, $\alpha, \alpha' : X \times X \to \Maps(S \times S, S)$ and $\beta, \beta' : S \times S \to \Maps(X \times X, X)$ maps. Then the set $X \times S$ with the binary operations 
\begin{equation}
(x, s)\cdot (y, t)= \big( \beta_{s, t}(x, y), ~\alpha_{x, y}(s, t) \big)
\end{equation}
and
\begin{equation}
(x, s) + (y, t)= \big( \beta'_{s, t}(x, y), ~\alpha'_{x, y}(s, t) \big)
\end{equation}
forms a  linear cycle set  if and only if the following conditions hold:
\begin{enumerate}
\item the map $(y, t) \mapsto  \big( \beta_{s, t}(x, y),~ \alpha_{x, y}(s, t) \big)$ is a bijection  for each $(x, s) \in X \times S$,
\item $\beta_{s, \alpha'_{y, z}(t, q)}\Big(x,~ \beta'_{t, q}(y, z) \Big)= \beta'_{\alpha_{x, y}(s, t), \alpha_{x, z}(s, q)}\Big(\beta_{s, t}(x, y),~ \beta_{s, q}(x, z) \Big)$ 
\item[] and
\item[] $\alpha_{x, \beta'_{t, q}(y, z)}\Big(s, ~\alpha'_{y, z}(t, q) \Big)= \alpha'_{\beta_{s, t}(x, y), \beta_{s, q}(x, z)}\Big(\alpha_{x, y}(s, t), ~\alpha_{x, z}(s, q) \Big)$,

\item $\beta_{\alpha'_{x, y}(s, t), q}\Big(\beta'_{s, t}(x, y),~ z \Big)= \beta_{\alpha_{x, y}(s, t), \alpha_{x, z}(s, q)}\Big(\beta_{s, t}(x, y),~ \beta_{s, q}(x, z) \Big)$ 
\item[] and
\item[] $\alpha_{\beta'_{s, t}(x, y), z}\Big(\alpha'_{x, y}(s, t), ~q \Big)= \alpha_{\beta_{s, t}(x, y), \beta_{s, q}(x, z)}\Big(\alpha_{x, y}(s, t), ~\alpha_{x, z}(s, q) \Big)$
\end{enumerate}
 for all $x, y, z \in X$ and $s, t, q \in S$.
\end{proposition}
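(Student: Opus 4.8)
The plan is to prove the equivalence by direct verification, matching each of the three listed conditions to one of the defining requirements of a linear cycle set evaluated on triples $(x,s),(y,t),(z,q)\in X\times S$. Recall from the remark preceding the statement that, once $(X\times S,+)$ is an abelian group, the cyclic identity \eqref{E:Cyclic} is redundant; hence it suffices to check that the left translations of $\cdot$ are bijective and that the two linearity axioms \eqref{E:LinCyclic} and \eqref{E:LinCyclic2} hold. I would therefore treat the abelian group structure supplied by $\alpha'$ and $\beta'$ as given, and concentrate on these three remaining requirements.

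First I would note that condition (1) is precisely the statement that, for each fixed $(x,s)$, the left translation $(y,t)\mapsto (x,s)\cdot(y,t)=\big(\beta_{s,t}(x,y),\,\alpha_{x,y}(s,t)\big)$ is a bijection of $X\times S$; this is exactly the cycle set requirement on $\cdot$. Thus condition (1) holds if and only if the left translations are bijective.

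Next I would expand \eqref{E:LinCyclic}, that is, $(x,s)\cdot\big((y,t)+(z,q)\big)=(x,s)\cdot(y,t)+(x,s)\cdot(z,q)$. On the left I would first compute $(y,t)+(z,q)=\big(\beta'_{t,q}(y,z),\,\alpha'_{y,z}(t,q)\big)$ and then apply $\cdot$; on the right I would compute $(x,s)\cdot(y,t)$ and $(x,s)\cdot(z,q)$ and then add them. Comparing the two sides coordinatewise, the $X$-coordinates yield the first equation of condition (2) and the $S$-coordinates yield the second equation of condition (2). Hence \eqref{E:LinCyclic} is equivalent to condition (2).

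Finally I would expand \eqref{E:LinCyclic2}, that is, $\big((x,s)+(y,t)\big)\cdot(z,q)=\big((x,s)\cdot(y,t)\big)\cdot\big((x,s)\cdot(z,q)\big)$, in the same manner: compute $(x,s)+(y,t)=\big(\beta'_{s,t}(x,y),\,\alpha'_{x,y}(s,t)\big)$ before applying $\cdot(z,q)$ on the left, and compute the two inner products before applying $\cdot$ on the right. The resulting coordinatewise comparison gives exactly the two equations of condition (3), so \eqref{E:LinCyclic2} is equivalent to condition (3). Combining the three equivalences with the redundancy of \eqref{E:Cyclic} proves the proposition. I expect the only real obstacle to be the bookkeeping: in both expansions the maps $\beta'$ and $\alpha'$ appear nested inside the subscripts and the arguments of $\beta$ and $\alpha$, so I would track carefully which variables occupy the subscripts and which occupy the arguments at each stage, since a single misplaced index would break the coordinatewise matching.
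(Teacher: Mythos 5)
Your proof is correct and is exactly the argument the paper intends (the paper states this proposition without a written proof, relying on the immediately preceding remark that axiom \eqref{E:Cyclic} is redundant): condition (1) encodes bijectivity of the left translations, and the coordinatewise expansions of \eqref{E:LinCyclic} and \eqref{E:LinCyclic2} yield precisely the pairs of equations in conditions (2) and (3), as your index bookkeeping confirms. Your explicit caveat that the abelian group structure furnished by $\alpha'$ and $\beta'$ must be taken as given is also the right reading of the statement, since conditions (1)--(3) alone do not force $+$ to be an abelian group operation.
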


As before, if one of the sets is already a linear cycle set, then we have

\begin{corollary}
Let $(X, \cdot, +)$ be a linear cycle set, $S$ a set and $\alpha, \alpha' : X \times X \to \Maps(S \times S, S)$ two maps. Then the set $X \times S$ with the binary operations 
\begin{equation}
(x, s)\cdot (y, t)= \big( x \cdot y, ~\alpha_{x, y}(s, t) \big)
\end{equation}
and
\begin{equation}
(x, s) + (y, t)= \big( x + y, ~\alpha'_{x, y}(s, t) \big)
\end{equation}
forms a  linear cycle set  if and only if the following conditions hold:
\begin{enumerate}
\item the map $(y, t) \mapsto  \big( x \cdot y,~ \alpha_{x, y}(s, t) \big)$ is a bijection  for each $(x, s) \in X \times S$, 
\item  $\alpha_{x, y+z}\Big(s, ~\alpha'_{y, z}(t, q) \Big)= \alpha'_{x \cdot y, x \cdot z}\Big(\alpha_{x, y}(s, t), ~\alpha_{x, z}(s, q) \Big)$,
\item  $\alpha_{x + y, z}\Big(\alpha'_{x, y}(s, t), ~q \Big)= \alpha_{x \cdot y, x \cdot z}\Big(\alpha_{x, y}(s, t), ~\alpha_{x, z}(s, q) \Big)$
\end{enumerate}
 for all $x, y, z \in X$ and $s, t, q \in S$.
\end{corollary}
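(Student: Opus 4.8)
The plan is to deduce this directly from the preceding Proposition by specialising its "$X$-valued" structure maps to the fixed linear cycle set operations on $X$. Concretely, I would take $\beta_{s, t}(x, y) := x \cdot y$ and $\beta'_{s, t}(x, y) := x + y$ for all $s, t \in S$, so that both $\beta$ and $\beta'$ are independent of the $S$-coordinate and the two operations of the Proposition reduce verbatim to the operations in the statement. The whole task then becomes checking that, under this specialisation, the conditions of the Proposition collapse to the three conditions (1)--(3) listed here; note that no separate verification of the cyclic identity \eqref{E:Cyclic} is needed, since it is redundant (as recorded just before the Proposition) and is already absent from the Proposition's hypotheses.

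First I would observe that condition (1) of the Proposition is unchanged: the left-translation bijectivity $(y, t) \mapsto (x \cdot y,\, \alpha_{x, y}(s, t))$ is exactly condition (1). Next, I would substitute the specialised $\beta, \beta'$ into the two halves of condition (2) of the Proposition. The $\beta$-half becomes
\[ x \cdot (y + z) = x \cdot y + x \cdot z, \]
which is precisely the linearity axiom \eqref{E:LinCyclic} and therefore holds automatically because $X$ is a linear cycle set; the $\alpha$-half becomes
\[ \alpha_{x,\, y + z}\big(s,\, \alpha'_{y, z}(t, q)\big) = \alpha'_{x \cdot y,\, x \cdot z}\big(\alpha_{x, y}(s, t),\, \alpha_{x, z}(s, q)\big), \]
which is condition (2). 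Analogously, the $\beta$-half of condition (3) becomes $(x + y) \cdot z = (x \cdot y) \cdot (x \cdot z)$, i.e. the axiom \eqref{E:LinCyclic2}, again automatic in $X$, while its $\alpha$-half becomes condition (3). Thus exactly the two "first-coordinate" relations evaporate into the standing axioms of $X$, and the two surviving "second-coordinate" relations are (2) and (3); since the evaporated relations are always true under our hypotheses, the full condition set of the Proposition is equivalent to $(1)\wedge(2)\wedge(3)$, and reading the Proposition's equivalence in both directions yields the stated "if and only if".

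The one point I would treat with care is the status of the additive operation on $X \times S$. Because $\beta'$ has been fixed to $x + y$, the first coordinate of $+$ inherits the abelian group structure of $(X, +)$, but commutativity, associativity, identity and inverses for the full operation $(x, s) + (y, t) = (x + y,\, \alpha'_{x, y}(s, t))$ still impose symmetry and cocycle-type constraints on $\alpha'$ alone; these belong to the ambient requirement that $+$ be an abelian group operation on $X \times S$ (exactly as in the definition of a linear cycle set, and as implicit in the preceding Proposition), and are not expected to follow from (1)--(3). I anticipate no deeper obstacle: once the $+$-structure is granted, the remaining work is pure coordinate bookkeeping, with the linear cycle set laws of $X$ absorbing every first-coordinate obligation.
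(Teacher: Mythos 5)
Your proposal is correct and matches the paper's (implicit) argument exactly: the corollary is obtained by specialising the preceding proposition via $\beta_{s,t}(x,y)=x\cdot y$ and $\beta'_{s,t}(x,y)=x+y$, whereupon the first-coordinate halves of its conditions (2) and (3) reduce to the standing axioms \eqref{E:LinCyclic} and \eqref{E:LinCyclic2} of $X$ and the second-coordinate halves become conditions (2) and (3) of the corollary. Your cautionary remark about the abelian group requirement on $(X\times S,+)$ being an ambient hypothesis on $\alpha'$, rather than a consequence of (1)--(3), is also consistent with how the paper treats both the proposition and the corollary.
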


The pair of maps $\alpha, \alpha' : X \times X \to \Maps(S \times S, S)$  satisfying conditions (2)-(4) of the preceding corollary is called a {\it dynamical cocycle} of the linear cycle set $X$ with values in $S$ and the linear cycle set structure on $X \times S$ is called the {\it dynamical extension} of $X$ by $S$. Taking $$\alpha_{x, y}(s, t)= t+f(x, y)$$ and $$\alpha'_{x, y}(s, t)= s + t + g(x, y)$$ for some 2-cocycle $(f,g) \in \Z_\Norm^2(X; A)$,  we see that the dynamical extension generalises the extension obtained in Lemma \ref{linear-cycle-structure}.
\medskip

\noindent\textbf{Acknowledgments.}
The authors are grateful to the referee for the elaborate report which substantially improved the clarity and exposition of the paper.  Bardakov is supported by the Ministry of Science and Higher Education of Russia (agreement No. 075-02-2020-1479/1). Singh is supported by the SwarnaJayanti Fellowship grants DST/SJF/MSA-02/2018-19 and SB/SJF/2019-20/04 and the Indo-Russian grant DST/INT/RUS/RSF/P-19.
\medskip

\end{document}